\documentclass[letterpaper,12pt]{amsart}
\usepackage[margin=1in]{geometry}
\usepackage[dvipsnames]{xcolor}
\usepackage{ytableau}
\usepackage{verbatim}
\usepackage{color}
\usepackage{amsmath, amsthm}
\usepackage{amssymb}
\usepackage{todonotes}
\usepackage{tikz}
\usetikzlibrary{cd}
\usepackage{bbm}
\usepackage{adjustbox}
\usepackage{multirow}
\usepackage{multicol}
\usepackage{float, comment}
\usetikzlibrary{decorations.markings}

\usepackage[pdftex,colorlinks,citecolor=blue,linkcolor=red]{hyperref}

\theoremstyle{plain}
\newtheorem{theorem}{Theorem}[section]
\newtheorem{lemma}[theorem]{Lemma}

\newtheorem{corollary}[theorem]{Corollary}

\theoremstyle{definition}
\newtheorem{definition}[theorem]{Definition}
\newtheorem{example}[theorem]{Example}
\newtheorem{remark}[theorem]{Remark}

\def\phi{{\varphi}}
\def\cM{{\mathcal{M}}}

\newcommand{\cO}{\mathcal{O}}

\newcommand{\lcm}[1]{\mathrm{lcm}({#1})}

\input{example}

\begin{document}
\title[Identifying orbit lengths for promotion]{Identifying orbit lengths for promotion}
\author{Elise Catania}
\address{University of Minnesota, Church St SE, Minneapolis MN 55455}
\email{catan042@umn.edu}

\author{Jack Kendrick}
\address{University of Washington, Padelford Hall, Seattle WA 98195}
\email{jackgk@uw.edu}

\author{Heather M. Russell}
\address{University of Richmond, Jepson Hall, Richmond VA 23173}
\email{hrussell@richmond.edu}

\author{Julianna Tymoczko}
\address{Smith College, Northampton MA 01063}
\email{jtymoczko@smith.edu}

\thanks{HR was partially supported by an AMS-Simons Research Enhancement Grant for PUI Faculty.  JT was partially supported by NSF DMS grants 1800773, 2054513, and 2349088, and an AWM MERP fellowship.}

\keywords{Young tableaux, jeu de taquin, promotion, web, non-crossing matching}
\subjclass{05E10, 05C10}

\begin{abstract} In this work we study Sch\"utzenberger's promotion operator on standard Young tableaux via a corresponding graphical construction known as $m-$diagrams. In particular, we prove that certain internal structures of SYT are preserved under promotion and correspond to distinct components of $m-$diagrams. By treating these structures as atomic parts of the $m-$diagram, we provide a simple algorithm for computing the promotion orbit length of rectangular SYT. We conclude the paper by applying our results to (column) semi-standard Young tableaux and prove a formula for the promotion orbit lengths of rectangular (column) SSYT.
\end{abstract}

\maketitle

\section{Introduction}

Young tableaux are fundamental combinatorial objects.
With various choices of fillings and rules for manipulations, Young tableaux encode important data and processes from across mathematics \cite{Fulton_1996}. Here, we focus on Sch\"utzenberger's \emph{promotion} operator on \emph{standard Young tableaux} \cite{Schutzenberger1972}. Promotion features in the study of representations and algebraic operations on representations 
\cite{Alexandersson, Haiman, pappe, Pfannerer}, Schubert varieties and the cohomology ring structure induced by Schubert classes \cite{thomas1, thomas2}, and other constructions in algebra and geometry \cite{fontainekamnitzer2014, pechenik2}.

Promotion acts bijectively on the set of standard Young tableaux of a fixed shape and so induces a partition of the set of SYT with a given shape into distinct promotion orbits. A natural question is whether the size of an orbit can be determined from only one representative tableau. Previous work has focused on the {\em promotion order} of the partition, namely the shape of the tableau. The promotion order corresponding to the shape $\lambda$ is the minimal value $N$ such that $P^N(T) = T$ for every tableaux $T$ of shape $\lambda.$ In general, the promotion order is not well-behaved. For instance, the promotion order of the shape $\lambda =(8,6)\vdash 14$ is 7,554,844,752 \cite{striker2}. However, restricting to {\em rectangular} SYT yields much more promising results. A seminal --- and very surprising --- result of Haiman shows that the order of promotion for $m \times n$ rectangular shapes  is $mn$ \cite{Haiman}. In addition, Haiman characterizes all shapes $\lambda$  such that the order of promotion is $|\lambda|$, which he calls generalized staircases.

We can also study the structure and sizes of promotion orbits. An important result due to Rhoades shows that we can get data about promotion orbits for rectangular shapes via cyclic-sieving \cite{reiner, rhoades}. In particular, for each positive integer $n$,  the quantum hook-length polynomial measures the number of tableaux fixed by $n$ applications of the promotion operator.  This has been explored and extended in different contexts, using related operators \cite{pechenik, striker3, striker} or equivariant maps to other sets of objects equipped with a cyclic group action \cite{armstrong, petersen2009promotion, russell, Julianna}.  
 
Yet, little has been done to analyze the sizes and elements of promotion orbits for \emph{specific} standard Young tableaux, despite analogous classification and stability questions in representation theory.  One of the few results holds for rectangular shapes: Purbhoo and Rhee gave a bijection between the tableaux in minimal promotion orbits and elements of the symmetric group \cite{purbhoo}.  

In this paper, we also focus on rectangular shapes. We give a process for computing the size of a tableau's promotion orbit by exploiting the geometry of an object called an $m-$diagram. An $m-$diagram is a union of noncrossing matchings; the connected components of an $m-$ diagram refine the data of a noncrossing partition.  While promotion of rectangular tableaux is not equivariant with rotation of $m-$diagrams, we show that the $m-$diagrams nonetheless encodes important information. In particular, we do the following:
 \begin{enumerate}
\item Define a notion of uniformly equivalent tableaux and investigate promotion via the graphical construction of $m-$diagrams.
 
\item Provide an algorithm to determine the orbit length of a tableau based on geometric symmetries of the connected components in its corresponding $m-$diagram.  In particular, this allows us to analyze orbit lengths when the $m-$diagrams are disconnected (see Theorem \ref{algorithm}).

\item Extend our ideas to (column) semi-standard Young tableaux and provide a formula for the the orbit lengths of these tableaux.
\end{enumerate}

We do not examine the structure of $m-$diagrams deeply in this paper and leave as an open question whether sharper analysis would give more information about promotion orbits.

\subsection*{Outline} In Section \ref{sec:prelim} we provide the necessary background on standard Young tableaux and $m-$diagrams. Our main results on the promotion orbit lengths of rectangular SYT are contained in Section \ref{section: main}. We conclude the paper by applying our results to (column) semi-standard Young tableaux in Section \ref{sec:ssyt}.

\subsection*{Acknowledgements} We would like to thank Meredith Pan and Risa Vandegrift for helpful discussions during the early stages of this work.  
\section{Preliminaries}\label{sec:prelim}

\subsection{Tableaux}{\label{sec: tableaux}} We begin by fixing notations and conventions for tableaux.

\begin{definition}[Standard Young Tableau]
Suppose $\lambda = (\lambda_1 \geq \lambda_2 \geq \cdots \geq \lambda_k)$ is a partition of $n$. A \emph{Young diagram} of shape $\lambda$ is a grid of left- and top-aligned boxes with $\lambda_i$ boxes in the $i^{th}$ row for each $1 \leq i \leq k$. If all parts of the partition $\lambda$ are the same then we call $\lambda$ a rectangular partition and its Young diagram a \emph{rectangular Young diagram}.

A \emph{standard Young tableau (SYT)} is a filling of the Young diagram for $\lambda$ with values $1, \ldots, n$ without repetition such that rows strictly increase left-to-right and columns strictly increase top-to-bottom.  
The \emph{content} of a tableau $T$ is the collection of integers filling its boxes. 
\end{definition}

Given two partitions $\mu$ and $\lambda$ with $\mu_i\leq \lambda_i$ for all $i$, the set of boxes belonging to the Young diagram corresponding to shape $\lambda$ but not $\mu$ yields a {\it skew diagram} of shape $\lambda/\mu$. A {\it skew tableau} is a filling of a skew diagram such that for each box, all entries to the right and below are strictly larger.

Given a tableau $T,$ we denote the number of boxes in $T$ as $|T|$ and the entry in the $i^{th}$ row and $j^{th}$ column of $T$ as $T_{ij}$. Next, we review Sch\"utzenberger's jeu de taquin promotion on SYT, which is a function from SYT of fixed shape $\lambda$ to SYT of the same shape \cite{Schutzenberger1972,Stanley}.

\begin{definition}[Promotion]\label{definition: promotion}
Given an SYT $T$, the \emph{promotion} of $T$ is the SYT $P(T)$ created as follows:
\begin{enumerate}
\item Erase 1 in the top left corner of $T$ and leave an empty box.
\item Given the configuration $\ytableausetup {boxsize=0.5 cm} \begin{ytableau} {} & b \\ a \\ \end{ytableau}$ and $b < a$ then slide $b$ left; else if $a < b$ slide $a$ up.
\item Repeat the above process until there are no nonempty boxes below or to the right of the empty box.
\item Decrement all entries by $1$ and insert the largest entry of $T$ into the empty box.
\end{enumerate}
\end{definition}

\begin{example} Figure~\ref{fig:T and P(T)} gives an example of an SYT $T$ and its promotion $P(T).$ This tableau will serve as our running example.
    \begin{figure}[h]
        \centering
        \begin{tikzpicture}
            \node at (0,0) {\T};
            \node at (-2.5,0) {$T =$};
            \node at (5,0) {\PT};
            \node at (8,0) {$ = P(T)$};

            \draw[style=thick, ->] (2,0) -- (3,0);
        \end{tikzpicture}

        \caption{The tableau $T$ and its promotion $P(T)$}
        \label{fig:T and P(T)}
    \end{figure}
\end{example}

We are particularly interested in the orbits of tableaux under promotion. For a given SYT $T$, we denote the promotion orbit of $T$ by $\cO(T).$ The key observation in our approach is that certain internal structures of SYT are preserved during promotion. To this end, we introduce the following definition.

\begin{definition}[Equivalence]
Suppose $S$ and $T$ are skew tableaux with $|S|=|T|$ and contents $\{s_1, s_2, \ldots, s_n\}, \{t_1, t_2, \ldots, t_n\}.$ We say that $S$ is \textit{equivalent} to $T,$ or $S\equiv T,$ if $S$ and $T$ are the same after each tableau is left justified with the entries relabeled smallest to largest so that the content becomes $\{1, 2, \ldots, n\}.$ Moreover, we say $S$ is \textit{uniformly equivalent} to $T$ if the content of $S$ is $\{k+i_1, k+i_2, \ldots, k+i_n\}$ and the content of $T$ is $\{\ell +i_1, \ell +i_2, \ldots , \ell+i_n\}$ for some $k, \ell$ and $S\equiv T$.  
\end{definition}

\begin{remark}
    Note that if $i_1, \ldots, i_n$ is equal to $1, \ldots, n$ and each row of $S, T$ is no shorter than the row below it, then $S$ and $T$ are uniformly equivalent to an SYT.
\end{remark}

Uniform equivalence is a powerful condition that allows us to treat uniformly proper subtableaux as atomic parts of the corresponding $m$-diagrams (and webs; see Section~\ref{sec: webs}). However, there are contexts in which more generality is useful.

\begin{definition}[Uniformly Proper Subtableau]
Suppose $T$ is a tableau with $n$ rows and $S\subsetneq T$ is a (skew) subtableau. We call $S$ {\em uniformly proper} if:
\begin{enumerate}
    \item $S$ has $n$ rows
    \item $S$ is uniformly equivalent to a standard Young tableau
\end{enumerate}
We call $S$ a uniformly proper {\em rectangular} subtableau if in addition all of its rows have the same length. If $T$ contains no uniformly proper subtableau, we say that $T$ is \textit{minimal.}
\end{definition}

Given two skew tableaux $T$ and $T',$ we write $TT'$ to refer to the tableau resulting from the horizontal concatenation of $T$ and $T'$ when such an operation makes sense. In particular, if $T$ is a SYT that is not minimal, and thus has some uniformly proper rectangular subtableau $S,$ we can write the tableau $T$ as the horizontal concatentation $T = T_1 ST_2$ where $T_1$ and $T_2$ are also subtableaux of $T.$  Figure~\ref{fig:T=T_1ST_2} gives an example. 

Note that if we fix a uniformly proper rectangular subtableau $S$ of $T$, there is a unique pair $T_1, T_2$ that decompose $T$ as $T_1ST_2$.  However, if $T$ contains multiple uniformly proper rectangular subtableaux then there are many ways to decompose $T$ as a horizontal concatenation.  

\begin{figure}[h]
    \centering
    \ytableausetup{boxsize=0.6cm}
\begin{tikzpicture}
\node at (-4, 0) {\T};
\node at (-1.5, 0) {=};
\node at (0,0){\begin{ytableau}
1 & 2 \\
3  \\
4  \\
5  \\
\end{ytableau}};
\node at (2, 0) {\begin{ytableau}
    \none & 6 & 7 & 14 \\
     8 & 9 & 15 \\
    10 & 11 & 16 \\
    12 & 13 & 17
\end{ytableau}};
\node at (4, 0) {\begin{ytableau}
    \none & 19 \\
    18 & 21 \\
    20 & 23 \\
    22 & 24
\end{ytableau}};

\node at (-4, -1.75) {$T$};
\node at (0, -1.75) {$T_1$};
\node at (2, -1.75) {$S$};
\node at (4, -1.75) {$T_2$};
\end{tikzpicture}
    \caption{Decomposition of tableau $T$ with uniformly proper subtableau $S$}
    \label{fig:T=T_1ST_2}
\end{figure}

We will see in Section \ref{section: main} that the problem of determining the orbit length $|\cO(T)|$ of any non-minimal tableau $T$ can be reduced to determining the orbit lengths of a subset of its minimal uniformly proper subtableaux.

\subsection{m-diagrams}{\label{sec: webs}}

We are motivated by open questions about webs, which are planar graphs that represent functions in a diagrammatic category for certain representations of quantum groups $U_q(\mathfrak{sl}_n)$ \cite{kuperberg1996spiders}. 
There is a rich literature on how promotion of tableaux corresponds to rotation of webs in the cases $n=2$ and $n=3$ \cite{petersen2009promotion, russell, Julianna}; 
but this is constrained for larger $n$ by our limited understanding of webs in that case (though see \cite{gaetz2023rotation} for a rotation-invariant basis when $n=4$).

For this reason, we instead focus on an intermediary between webs and tableaux: $m$-diagrams, which are collections of arcs that satisfy certain noncrossing conditions.

\begin{definition}[$m$-Diagram] \label{definition: m-diagrams}
A \emph{matching} $\mathcal{M}$ on the set $\{1, 2, \ldots, n\}$ is a collection of pairs 
\[\mathcal{M} = \{(i_1, j_1), (i_2, j_2), \ldots, (i_k,j_k)\} \subseteq \{1, 2, \ldots, n\} \times \{1, 2, \ldots, n\}\]
such that each element of $\{1,\ldots, n\}$ occurs in at most one pair. We often refer to the pair $(i,j)$ as an \emph{arc} and assume in our notation that $i \leq j$.  We say that an integer $i'$ is \emph{on the arc} $(i,j)$ if $i' \in \{i,j\}$ and \emph{below the arc} $(i,j)$ if $i<i'<j$.

A matching $\mathcal{M}$ is:
\begin{itemize}
    \item \emph{perfect} if every number $1, 2, \ldots, n$ is used on an arc;
    \item \emph{crossing} if it contains two arcs $(i,j), (i',j') \in \mathcal{M}$ with $i<i'<j<j'$;
    \item \emph{noncrossing} if it is not crossing;
    \item \emph{with repetition} if it contains at least one arc of the form $(i,i)$;
    \item \emph{standard} if every integer $i'$ below an arc $(i,j)$ is itself on an arc.
\end{itemize}
An \textit{$m$-diagram} on the set $\{1, 2, \ldots, n\}$ is the union $\bigcup_{i=1}^r \mathcal{M}_i$ where each $\mathcal{M}_i$ is a noncrossing matching.  
\end{definition}

Note that an $m$-diagram may not itself be a noncrossing matching since arcs from one matching are allowed to cross arcs from another matching.  An $m$-diagram may have arcs of the form $(i,i)$ but is itself not a multiset, so each arc appears at most once in an $m$-diagram regardless of how many different matchings contain it.

We represent matchings and $m$-diagrams by drawing semicircular arcs in the upper half plane between the points $i$ and $j$ on the $x$-axis, as in Figure~\ref{fig:T and M_T}. The next lemma demonstrates a simple bijection between SYT and $m-$diagrams, the proof of which can be found in \cite[Lemma 1]{russell} and \cite[Prop. 2.4]{Julianna}. We include a sketch of the main arguments for completeness.

\begin{lemma} \label{lemma: creating m-diagram from tableau}
Let $T$ be an SYT with $r$ rows and content $\{1, \ldots, n\}$.  For each $1\leq i \leq r-1$ the following recursive process constructs a noncrossing matching $\mathcal{M}_i$ on the integers filling rows $i$ and $i+1$ of $T$:
\begin{enumerate}
    \item If row $i$ of $T$ has $\lambda_i$ boxes then denote them by $t_i(1), t_i(2), \ldots, t_i(\lambda_i)$ and similarly for row $i+1$.
    \item Create an arc $(t_i(j), t_{i+1}(1))$ where 
    \[j = \max\{1 \leq s \leq \lambda_i: t_i(s) \leq t_{i+1}(1)\}.\]
    \item If $t_{i+1}(1), \ldots, t_{i+1}(\ell)$ are all on arcs then create an arc $(t_i(j), t_{i+1}(\ell+1))$ where 
    \[j = \max\{1 \leq s \leq \lambda_i: t_i(s) \leq t_{i+1}(\ell+1) \textup{ and } (t_i(s),t_{i+1}(j')) \textup{ is not an arc for any } j' \leq \ell\}.\]
\end{enumerate}
\end{lemma}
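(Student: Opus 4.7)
The plan is to verify two things about the recursive procedure: (i) it is well-defined, meaning the set over which we take the maximum is nonempty at every step, and (ii) the resulting collection of arcs is noncrossing. Both will follow by induction on the step index $\ell$, with the inductive hypothesis carrying along the full list of arcs constructed so far and their "used" row-$i$ indices.

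For well-definedness, I would fix $\ell < \lambda_{i+1}$ and show that the candidate set at step $\ell+1$ is nonempty. Because $T$ is standard, columns strictly increase, so $t_i(s) < t_{i+1}(s) \leq t_{i+1}(\ell+1)$ for every $1 \leq s \leq \ell+1$. Thus the indices $1, 2, \ldots, \ell+1$ all satisfy the inequality $t_i(s) \leq t_{i+1}(\ell+1)$. At step $\ell+1$ only $\ell$ row-$i$ indices have previously been used, so by pigeonhole at least one index in $\{1,\ldots,\ell+1\}$ remains available, and the maximum is taken over a nonempty finite set. This also shows the construction terminates after exactly $\lambda_{i+1}$ steps and produces a matching between row $i+1$ and row $i$.

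For the noncrossing property, I would pick any two arcs $\alpha_1 = (t_i(j_1), t_{i+1}(\ell_1))$ and $\alpha_2 = (t_i(j_2), t_{i+1}(\ell_2))$ of $\mathcal{M}_i$ with $\ell_1 < \ell_2$, note that $j_1 \neq j_2$ (since $j_1$ becomes used at step $\ell_1$ and stays used), and analyze the two subcases. If $j_1 > j_2$ then $t_i(j_2) < t_i(j_1) \leq t_{i+1}(\ell_1) < t_{i+1}(\ell_2)$, so $\alpha_2$ nests $\alpha_1$ and the arcs are noncrossing. If $j_1 < j_2$, the critical step is to rule out $t_i(j_2) \leq t_{i+1}(\ell_1)$: were it to hold, then at step $\ell_1$ the index $j_2$ would have been in the candidate set (the inequality is satisfied and $j_2$ was unused, because $j_2$ only becomes used at the later step $\ell_2$), and since $j_2 > j_1$ this would contradict the maximality in the choice of $j_1$. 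Hence $t_i(j_2) > t_{i+1}(\ell_1)$, and the four endpoints are in the order $t_i(j_1) < t_{i+1}(\ell_1) < t_i(j_2) < t_{i+1}(\ell_2)$, so $\alpha_1$ and $\alpha_2$ are disjoint on the number line and therefore noncrossing.

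The main obstacle I anticipate is just keeping the bookkeeping of used versus unused row-$i$ indices clean across the induction. I would handle this by making the inductive statement at stage $\ell$ record simultaneously (a) that the construction has produced arcs for $t_{i+1}(1), \ldots, t_{i+1}(\ell)$, (b) the exact set of row-$i$ indices used so far, and (c) that any two of these arcs are noncrossing. With this packaged hypothesis, the well-definedness argument and the maximality argument above plug in directly to establish the inductive step, and the conclusion is exactly the lemma.
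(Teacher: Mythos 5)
Your proposal is correct and follows essentially the same route as the paper: well-definedness via column-strictness plus a pigeonhole count of unused row-$i$ entries, and the noncrossing property via the maximality of the chosen index $j$ (your case $j_1 < j_2$ contradiction is exactly the paper's ``otherwise the recursive construction would have paired $j$ with $t_i(\ell)$'' step). The only difference is organizational --- you analyze arbitrary pairs of arcs in two cases (nested versus disjoint), while the paper fixes one arc and examines the entries lying below it --- which does not change the substance of the argument.
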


\begin{proof}
For each row $i+1$ and each entry $t_{i+1}(\ell)$ in that row, there are at least $\ell$ integers in row $i$ of value at most $t_{i+1}(\ell)$ because the entries of $T$ are increasing row- and column-wise.  These integers are all distinct because the values in each row of $T$ increase strictly and so the recursive process constructs an arc for each box in row $i+1$ of $T$.   

Suppose $(t_i(\ell), t_{i+1}(\ell'))$ is an arc in the matching $\mathcal{M}_i$.  If $j$ is an integer on row $i+1$ with $j < t_{i+1}(\ell')$ then $j$ is the endpoint of the arc created in an earlier step by the recursive construction to build $\mathcal{M}_i$.   Thus $\mathcal{M}_i$ is a standard matching on the set of integers filling rows $i$ and $i+1$ of $T$.  Moreover the arc $(j',j)$ satisfies $t_i(\ell)<j'$ since otherwise the recursive construction would have paired $j$ with $t_i(\ell)$.  Thus the arc $(j',j)$ does not cross $(t_i(\ell), t{i+1}(\ell'))$. Similarly, if $j$ is an integer on row $i$ with $t_i(\ell) < j < t_{i+1}(\ell')$ then $j$ must be on an arc $(j,t_{i+1}(\ell''))$ for some $\ell''<\ell'$ else $j=t_i(\ell)$ by construction. So $\mathcal{M}_i$ is noncrossing. This completes the proof.
\end{proof}

 In the following result, all of the proofs are immediate from the previous lemma together with the definitions.

\begin{corollary} \label{corollary: function varphi from tabs to m-diagrams}
The algorithm in Lemma~\ref{lemma: creating m-diagram from tableau} defines a function $\varphi(T)=M_T$ from $r$-row SYT to $m$-diagrams built from $r-1$ matchings.  In addition, all of the following hold:
\begin{itemize}
    \item The number of arcs in matching $\mathcal{M}_i$ equals the number of boxes in row $i+1$ of $T$.  
    \item Each matching $\mathcal{M}_i$ is without repetition.
    \item The matching $\mathcal{M}_i$ is standard with respect to the entries in rows $i$ and $i+1$ of $T$.
    \item If $T$ is a standard Young tableau on a rectangular Young diagram then $\varphi(T)=M_T$ is the union of $r-1$ matchings $\mathcal{M}_i$ of the same cardinality.  Moreover every integer $1, 2, \ldots, |T|$ is one of three types: both the end of an arc in $\mathcal{M}_i$ and the start of an arc in $\mathcal{M}_{i+1}$, or the start of an arc on $\mathcal{M}_1$, or the end of an arc in $\mathcal{M}_{r-1}$.
\end{itemize}  
\end{corollary}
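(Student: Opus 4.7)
The plan is to verify each of the four bulleted assertions directly from the recursive algorithm of Lemma \ref{lemma: creating m-diagram from tableau} together with the basic inequalities defining a standard Young tableau. The well-definedness of $\varphi$ is itself immediate: Lemma \ref{lemma: creating m-diagram from tableau} produces, for each SYT $T$ with $r$ rows, a noncrossing matching $\mathcal{M}_i$ for $1 \leq i \leq r-1$, and the union $M_T = \bigcup_i \mathcal{M}_i$ is an $m$-diagram by Definition \ref{definition: m-diagrams}.

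For the first bullet, step (2) of the construction generates an arc for $t_{i+1}(1)$ and step (3) iteratively generates an arc for each $t_{i+1}(\ell+1)$ as long as row $i$ still contains unused entries bounded above by $t_{i+1}(\ell+1)$; by the counting argument in the proof of Lemma \ref{lemma: creating m-diagram from tableau} (there are at least $\ell+1$ such entries), this yields exactly $\lambda_{i+1}$ arcs in $\mathcal{M}_i$. For the second bullet, every arc of $\mathcal{M}_i$ connects a row-$i$ entry to a row-$(i+1)$ entry, which are distinct integers, so no arc of the form $(a,a)$ appears. For the third bullet, any integer $j$ strictly between $t_i(\ell)$ and $t_{i+1}(\ell')$ that belongs to row $i$ or row $i+1$ must already lie on an arc of $\mathcal{M}_i$ by the case analysis in the proof of Lemma \ref{lemma: creating m-diagram from tableau}, so $\mathcal{M}_i$ is standard on the entries of rows $i$ and $i+1$.

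For the fourth bullet, I would first note that rectangularity gives $\lambda_1 = \cdots = \lambda_r = \lambda$, so the first bullet forces the number of arcs in each $\mathcal{M}_i$ to equal $\lambda$. To prove the trichotomy, I would argue that the recursive construction of $\mathcal{M}_i$ uses $\lambda$ distinct entries of row $i$ as left endpoints (distinctness being explicit in step (3)), so counting forces every entry of row $i$ to appear as the left endpoint of exactly one arc in $\mathcal{M}_i$; symmetrically, every entry of row $i+1$ is the right endpoint of exactly one arc in $\mathcal{M}_i$. Consequently each integer in row $1$ appears only as a start in $\mathcal{M}_1$, each integer in row $r$ only as an end in $\mathcal{M}_{r-1}$, and each integer in a middle row plays both roles. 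The only mild obstacle in this bookkeeping is confirming the counting match-up between row lengths and matching sizes, which is exactly where rectangularity enters in an essential way; without it, row $i$ may contain entries not used as left endpoints of $\mathcal{M}_i$, corresponding to the middle-row entries that fail to participate in both matchings.
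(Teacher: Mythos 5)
Your proposal is correct and matches the paper's approach: the paper gives no separate proof, stating only that all parts are immediate from Lemma~\ref{lemma: creating m-diagram from tableau} together with the definitions, and your bullet-by-bullet verification (one arc per box of row $i+1$, distinct endpoints across rows, the standardness argument already in the lemma's proof, and the counting argument for the rectangular trichotomy) is exactly the intended elaboration.
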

Figure~\ref{fig:T and M_T} gives an example of an $m-$diagram corresponding to an SYT.

\begin{figure}[H]
    \centering
    \begin{tikzpicture}[scale=0.95]
        \node at (0,0) {\T};
        \node at (0, -1.75) {$T$};
        \node at (9,0) {\MT};
        \node at (8.5,-1.75) {$\phi(T)$};
        \draw[style=thick, ->] (2.2,0)--(3,0);
    \end{tikzpicture}
    \caption{Tableau $T$ and its corresponding $m-$diagram $\phi(T)$}
    \label{fig:T and M_T}
\end{figure}
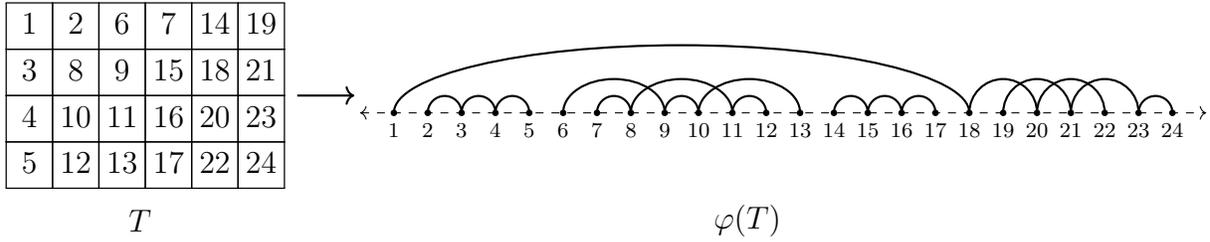

We define a rotation operation on $m$-diagrams algebraically as follows.  Intuitively, it consists of connecting the endpoints of the boundary to form a circle, rotating the circle one step clockwise, and then disconnecting the circle to again form a boundary line.

\begin{definition}[Rotation] \label{definition: rotation}
Denote by $\rho: \{1, 2, \ldots, n\} \rightarrow \{1, 2, \ldots, n\}$ the cyclic permutation given by $\rho(i)=i-1 \mod n$.  Then $\rho$ induces a map from the set of matchings on $\{1, 2, \ldots, n\}$ to itself by sending each arc $(a,b) \mapsto (\rho(a),\rho(b))$.  We call this map \emph{rotation} and denote the image of a matching $\mathcal{M}$ under rotation by $\rho(\mathcal{M})$.  
\end{definition}
Figure~\ref{fig:promotion-vs-rotation} shows the rotation $\rho(\phi(T))$ of the $m-$diagram $\phi(T)$ from Figure~\ref{fig:T and M_T}.  

The following two observations about rotation are almost immediate from the definitions.

\begin{lemma} \label{lemma: rotation preserves perfect and noncrossing matchings}
If $\mathcal{M}$ is a perfect matching then so is the rotation $\rho(\mathcal{M})$.  If $\mathcal{M}$ is a noncrossing matching then so is the rotation $\rho(\mathcal{M})$.
\end{lemma}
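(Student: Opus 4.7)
The plan is to prove the two claims separately. The perfect matching part is immediate from the fact that $\rho$ is a bijection on $\{1, \ldots, n\}$: a perfect matching has every element of $\{1, \ldots, n\}$ on exactly one arc, and applying the bijection $\rho$ to every endpoint produces a collection of arcs that again covers each element exactly once. So the only real work is the noncrossing claim.

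For the noncrossing claim, the cleanest conceptual approach is to use the classical identification of noncrossing matchings on the line $\{1, \ldots, n\}$ with noncrossing sets of chords between $n$ equally-spaced points on a circle, obtained by ``cutting'' the circle between $n$ and $1$. Two chords on the circle cross if and only if their endpoints interleave cyclically, which is precisely the condition $a < c < b < d$ from Definition~\ref{definition: m-diagrams}. Under this identification, the map $\rho$ corresponds to rigid rotation of the marked circle by $2\pi/n$, which is an isometry and therefore preserves the crossing/non-crossing status of every pair of chords. So a noncrossing matching rotates to a noncrossing matching.

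If a more hands-on argument is preferred, one can proceed by contrapositive with a small case analysis. Suppose $\rho(\mathcal{M})$ contains a crossing pair and lift these to arcs $(a, b)$ and $(c, d)$ in $\mathcal{M}$, with $a \leq c$. If neither arc has $1$ as an endpoint, then $\rho$ acts as ``subtract $1$'' on every endpoint and preserves the linear order, so a crossing after rotation comes from a crossing in $\mathcal{M}$. If one arc (necessarily $(a, b) = (1, b)$) contains the endpoint $1$, then it rotates to the ``wrap-around'' arc $(b-1, n)$, and the two possibilities for $(c, d)$ in the original matching — disjoint from $(1, b)$ (so $b < c < d$) or nested inside $(c < d < b)$ — each yield rotated arcs that are respectively nested or disjoint, hence noncrossing. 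The only real obstacle in either approach is careful bookkeeping with the reordering of endpoints once $\rho$ sends $1$ to $n$; the content of the statement is otherwise a direct consequence of rotational symmetry on the circle.
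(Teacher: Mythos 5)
Your proposal is correct. The ``hands-on'' version is essentially the paper's own argument: the paper also observes that every arc not touching $1$ is merely shifted left, so any crossing in $\rho(\mathcal{M})$ must involve the wrap-around arc $\rho((1,k))=(k-1,n)$, and then pulls such a crossing back to a crossing of $(1,k)$ with $(i+1,j+1)$ in $\mathcal{M}$; your case analysis (pushing the two noncrossing configurations, disjoint and nested, forward through $\rho$) is the same bookkeeping run in the opposite direction. The circle-rotation framing you lead with is a cleaner conceptual packaging of the same fact --- its only cost is that the equivalence between the linear condition $a<c<b<d$ and cyclic interleaving is exactly the content one would otherwise verify by the case analysis, so nothing is really saved, but both routes are sound.
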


\begin{proof}
Rotation sends perfect matchings to perfect matchings because it permutes the endpoints of the arcs.

Suppose $\mathcal{M}$ is noncrossing. Every arc in $\cM$ without endpoint 1, is simply shifted one unit to the left by $\rho$. Therefore, if two arcs cross in $\rho(\cM)$ one of them must be $\rho((1, k)) = (k-1, n).$ Say some arc $(i,j) \in \rho(\mathcal{M})$ crosses $(k-1,n)$. This implies $i<k-1 < j< n$ and also $1<i+1<k<j+1$. It follows that $\rho^{-1}((i,j))=(i+1, j+1)$ and $(1,k)$ cross in  $\mathcal{M}.$ This contradiction proves the claim.
\end{proof}

\begin{figure}[h]
    \centering
    \begin{tikzpicture}
        \node at (0, 0) {\MT};
        \node at (-4, -3.75) {\rhoMT};
        \node at (4, -3.825) {\MPT};

        \node at (0, -1) {$\phi(T)$};
        \node at (-4, -3) {$\rho(\phi(T))$};
        \node at (4, -3) {$\phi(P(T))$};

        \draw[->, style=dashed] (-0.5, -1) arc [start angle=90, end angle = 180, x radius = 3.5, y radius=1.5];
        \draw[->, style=dashed] (0.5, -1) arc [start angle=90, end angle = 180, x radius = -3.5, y radius=1.5];
    \end{tikzpicture}
    \caption{The rotation $\rho(\phi(T))$ compared to the $m-$diagram $\phi(P(T)).$}
    \label{fig:promotion-vs-rotation}
\end{figure}
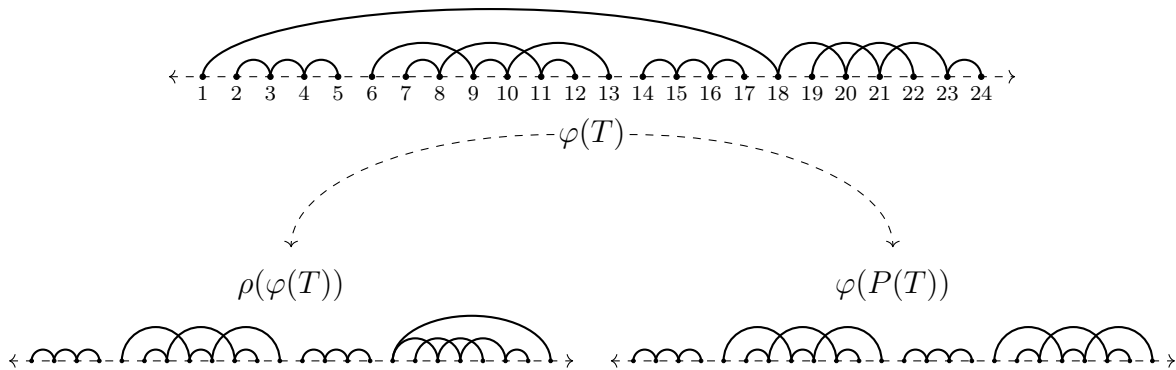

Just as rotation preserves the properties of being standard and noncrossing, we would like rotation to respect promotion, in the sense that rotating the $m$-diagram for SYT $T$ produces the $m$-diagram for the promotion $P(T)$. This does indeed hold for two-row rectangular SYT and for $\mathfrak{sl}_3$ webs coming from $m$-diagrams for three-row rectangular SYT \cite{petersen2009promotion, Julianna}. In general, however, promotion and rotation do not commute. For example, it is not true that the $m-$diagram $\phi(P(T))$ is equal to $\rho(\phi(T))$ for the tableau $T$ in our running example, see Figure \ref{fig:promotion-vs-rotation}. Moreover, in general, $\rho(\varphi(T))\neq \varphi(T')$ for any SYT $T'$. The next section describes a different way to connect rotation to promotion. The arguments in Section \ref{section: main} utilize the idea of \textit{sub-diagrams} and \textit{components} of an $m-$diagram, which we introduce here.

\begin{definition}[Subdiagram and Component]\label{def:component} Consider an $m-$diagram $M = \{(a_i, b_i)\}_{i=1}^r$ where each $(a_i, b_i)$ is an arc. Suppose $C\subsetneq M$ is a nonempty proper subcollection with $C=\{(a_{i_k}, b_{i_k})\}_{k=1}^s$. We say $C$ forms a \textit{sub-diagram} of $M$ if 
\begin{itemize}
    \item whenever $a$ is an endpoint of some arc in $C,$ all arcs in $M$ with $a$ as an endpoint are also in $C$ and
    \item no arc in $M$ but not $C$ crosses any arc in $C, $ namely no arc $(a_j, b_j)$ with $j\notin\{i_1, \ldots, i_s\}$ crosses an arc in $C.$ 
\end{itemize}  The sub-diagram is \textit{uniform} if its endpoints are all adjacent and a \textit{component} if it has no proper subcollection that forms a sub-diagram. Two sub-diagrams $C, C'$ are \textit{equivalent} if $\rho^N(C) = C'$ for some $N.$
\end{definition}

\begin{example}
    We elucidate Definition \ref{def:component} by illustrating the components of the $m-$diagram $\phi(T)$ from our running example, see Figure~\ref{fig:components}.
    \begin{figure}[h]
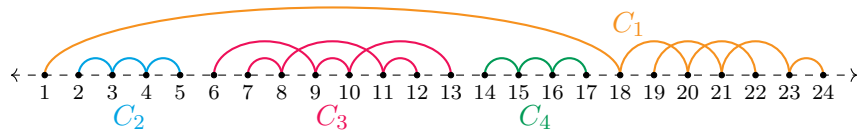

        \centering
        \MTcomponents
        \caption{Components of $\phi(T)$}
        \label{fig:components}
    \end{figure}
    The components of the $m-$diagram $\phi(T)$ are $C_1, C_2, C_3,$ and $C_4.$ The components $C_2, C_3, C_4$ are uniform, whereas $C_1$ is non-uniform. Note that the components $C_2$ and $C_4$ are equivalent since $\rho^{12}(C_4) = C_2.$ 
    Any union of components forms a sub-diagram of $\phi(T).$ For example, $C_2\cup C_3$ forms a uniform sub-diagram, whereas both $C_1\cup C_3$ and $C_2\cup C_4$ form non-uniform sub-diagrams.
\end{example}

\section{Determining Orbit Lengths}{\label{section: main}}
Our main result is an identification between the orbit lengths of rectangular SYT and the rotational symmetry of their corresponding $m-$diagrams. This identification allows us to explicitly compute the orbit length of a non-minimal tableau by only considering certain subtableaux. To this end, we provide a simple algorithm to compute orbit lengths of non-minimal SYT.

{\bf Throughout this section we fix a non-minimal \emph{rectangular} SYT $T$ with uniformly proper rectangular subtableau $S.$} It will often be convenient to consider $T$ as the horizontal concatenation $T_1 S T_2$ where the subtableau $T_1$ is non-empty, i.e. $|T_1|\geq 1.$

We begin by building a correspondence between uniformly proper rectangular subtableaux of $T$ and uniform sub-diagrams of its corresponding $m-$diagram $\phi(T).$ First, we show that the arcs generated by the algorithm in Lemma \ref{lemma: creating m-diagram from tableau} either have both end points in $S$ or neither end point in $S$.

\begin{lemma}{\label{lem: endpoints in S}} Suppose $S$ is a uniformly proper rectangular subtableau of $T.$ If the $m-$diagram $\varphi(T)$ contains an arc $(a, b)$ with either $a$ an entry of $S$ or $b$ an entry of $S,$ then both $a$ and $b$ are entries of $S.$
\end{lemma}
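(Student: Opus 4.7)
The plan is to trace through the arc-construction algorithm of Lemma~\ref{lemma: creating m-diagram from tableau} row-by-row and show, by induction, that every entry of row $i+1$ lying in $S$ is paired with an entry of row $i$ also lying in $S$. Since $T$ is rectangular each $\mathcal{M}_i$ is a perfect matching, so a counting argument will yield the corresponding statement for $S$-entries in row $i$, completing the proof.

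First I would unpack the hypothesis. Since $S$ is uniformly equivalent to a standard Young tableau, the content of $S$ is a contiguous block $\{k+1, k+2, \ldots, k+|S|\}$ for some $k \geq 0$. Within each row of $T$ the entries of $T_1$ precede those of $S$, which precede those of $T_2$, so the row-increase property of $T$ together with the contiguity of $S$'s content forces the content of $T_1$ to be $\{1, \ldots, k\}$ and the content of $T_2$ to be $\{k+|S|+1, \ldots, |T|\}$. Moreover, since $S$ is rectangular with common row length $c_S$, each of rows $i$ and $i+1$ of $S$ contributes exactly $c_S$ entries, all lying in the interval $[k+1, k+|S|]$.

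For the main inductive argument, enumerate the $S$-entries of row $i+1$ in increasing order as $s_{i+1,1} < s_{i+1,2} < \cdots < s_{i+1, c_S}$. I would show by induction on $\ell$ that $s_{i+1, \ell}$ is paired with an $S$-entry of row $i$. When the algorithm reaches $s_{i+1, \ell}$, it has already processed all $T_1$-entries of row $i+1$ (each paired with a $T_1$-entry of row $i$, since any row-$i$ candidate of value $\leq k$ must lie in $T_1$) and the first $\ell - 1$ $S$-entries of row $i+1$ (paired with distinct $S$-entries of row $i$ by induction). The algorithm then selects the largest unpaired row-$i$ entry $\leq s_{i+1, \ell}$. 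Every $T_2$-entry of row $i$ exceeds $k+|S| \geq s_{i+1, \ell}$ and is ineligible; every $S$-entry of row $i$ lies in $[k+1, k+|S|]$ and so is strictly larger than every $T_1$-entry of row $i$. Hence if any unpaired $S$-entry of row $i$ satisfies the bound, it is preferred over every $T_1$-entry and is selected. The existence of such an entry is precisely the Yamanouchi property applied to $S$ viewed as a standard Young tableau after relabeling: among the entries of $S$ with value $\leq s_{i+1, \ell}$, the count in row $i$ is at least the count in row $i+1$, which is $\ell$; since only $\ell - 1$ have been used by prior pairings, at least one remains available.

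Finally, the induction pairs each of the $c_S$ $S$-entries of row $i+1$ with a distinct $S$-entry of row $i$; since row $i$ of $S$ contains exactly $c_S$ entries, every $S$-entry of row $i$ is paired with an $S$-entry of row $i+1$. Hence no arc of $\mathcal{M}_i$ has exactly one endpoint in $S$, and applying the argument for each $i \in \{1, \ldots, r-1\}$ proves the lemma. The only delicate step is invoking the Yamanouchi bound on the rows of $S$, but this is immediate once $S$ is recognized as a standard Young tableau after shifting and left-justification.
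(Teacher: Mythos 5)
Your proof is correct and follows essentially the same route as the paper's: trace the matching algorithm of Lemma~\ref{lemma: creating m-diagram from tableau} across rows $i$ and $i+1$, show each row-$(i+1)$ entry of $S$ is matched within $S$ (using that $T_1$-entries are exhausted first and $T_2$-entries are too large), and then use rectangularity of $S$ for the counting argument in the other direction. Your explicit induction on $\ell$ with the Yamanouchi/ballot bound is a sharper version of the paper's terser claim that ``there is at least one entry on row $i$ of $S$ that has lower value'' and ``at least one yet to be matched,'' but the underlying idea is identical.
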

\begin{proof}
    Write $T$ as the horizontal concatenation $T= T_1 S T_2$ and suppose that $a = T_{ij}.$ Then, since $(a,b)$ is an arc in $\varphi(T),$ we must have that $b$ is on row $i+1$ of $T.$
    
    First, suppose that $b$ is an entry of $S.$ Since every entry of $T_2$ is greater than every entry of $T_1S,$ we must have that $a$ is an entry of $T_1S.$ Note that since $T_1$ is a proper subtableau of $T,$ the length of row $i$ of $T_1$ is at least the length of row $i+1$ of $T_1.$ Thus, when the algorithm in Lemma \ref{lemma: creating m-diagram from tableau} is creating arcs that begin with entries in row $i+1$ of $T,$ every entry on row $i+1$ of $T_1$ will be matched to an entry on row $i$ of $T_1.$ Since we also have that $S$ is rectangular, for each entry on row $i+1$ of $S,$ there is at least one entry on row $i$ of $S$ that is yet to be matched by the algorithm in Lemma \ref{lemma: creating m-diagram from tableau}. Moreover, since $S$ is uniformly equivalent to an SYT, for each entry on row $i+1$ of $S,$ there is at least one entry on row $i$ of $S$ that has lower value. As each entry of $S$ is greater than every entry of $T_1,$ it follows that each entry on row $i+1$ of $S$ is matched to an entry on row $i$ of $S.$ Thus, $a$ is an entry of $S.$

    Now, suppose $a$ is an entry of $S.$ Since each entry of $T_1$ is less than every entry of $S,$ we must have that $b$ is an entry of $ST_2.$ By the above argument, every entry on row $i+1$ of $S$ is matched to some entry on row $i$ of $S.$ Since $S$ is rectangular, the number of arcs beginning at an entry on row $i+1$ of $S$ is equal to the number of arcs ending at an entry on row $i$ of $S.$ It follows that $b$ is an entry on row $i+1$ of $S.$
\end{proof}

We can now identify uniformly proper rectangular subtableaux of $T$ with uniform sub-diagrams of the $m-$diagram $\phi(T).$ Moreover, by applying this correspondence recursively, we see that minimal uniformly proper subtableaux are in correspondence with uniform components of $\phi(T).$

\begin{lemma}{\label{lem: components subtableaux bijection}}
    Let $M=\varphi(T).$ We have the following correspondences:
    \begin{enumerate}
        \item The set of uniformly proper rectangular subtableaux of $T$ is in bijection with the uniform sub-diagrams of $M.$
        \item The set of minimal uniformly proper rectangular subtableaux of $T$ is in bijection with the uniform components of $M.$
    \end{enumerate}
\end{lemma}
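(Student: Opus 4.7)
The plan is to establish Part (1) by constructing the bijection explicitly in both directions, and then to derive Part (2) by threading the notion of minimality through that bijection.

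For the forward direction of Part (1), given a uniformly proper rectangular subtableau $S$, I will take $C_S$ to be the set of arcs of $M$ whose endpoints both lie in $S$. By Lemma~\ref{lem: endpoints in S} this coincides with the set of arcs having any endpoint in $S$, so the closure axiom for a sub-diagram holds automatically. The content of $S$ is a contiguous block $\{k+1,\ldots,k+m\}$ because $S$ is uniformly equivalent to an SYT, which makes $C_S$ uniform. Any arc outside $C_S$ has both endpoints outside this contiguous block, so it either lies entirely on one side of the block or spans completely over it; in either case it cannot cross an arc whose endpoints are both interior to the block, verifying the no-crossing axiom.

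For the reverse direction of Part (1), given a uniform sub-diagram $C$ with contiguous endpoints $\{k+1,\ldots,k+m\}$, I will take $S$ to be the set of boxes of $T$ with entries in this range and check three properties. First, $S$ meets every row of $T$: if the smallest entry $k+1$ sat in a row $i\geq 2$, then the arc of $\mathcal{M}_{i-1}$ ending at $k+1$ would have its start in $\{1,\ldots,k\}$, breaking closure of $C$, and an analogous propagation rules out any skipped intermediate row. Second, the row lengths of $S$ are all equal to some common $n_C$: by Corollary~\ref{corollary: function varphi from tabs to m-diagrams}, each row-$i$ entry of a rectangular SYT (for $i\leq r-1$) starts exactly one arc of $\mathcal{M}_i$ and each row-$(i+1)$ entry ends exactly one, so counting $|\mathcal{M}_i\cap C|$ in two ways forces the row-$i$ and row-$(i+1)$ counts of $S$ to coincide. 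Third, $S$ is uniformly equivalent to an SYT: after shifting entries by $-k$, the sub-diagram $C$ satisfies the structural conditions of Corollary~\ref{corollary: function varphi from tabs to m-diagrams} for a rectangular shape $r\times n_C$, and invoking injectivity of $\varphi$ on rectangular SYTs of that shape recovers a tableau which coincides with the left-justification of the shifted $S$.

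Part (2) then follows by relativizing Part (1) inside $S$: uniformly proper rectangular subtableaux of $S$ correspond bijectively with proper uniform sub-diagrams of $C_S$. If $C$ is a uniform component of $M$, it has no proper sub-diagram at all and in particular no proper uniform one, so the associated $S$ is minimal. Conversely, if $S$ is minimal, any proper sub-diagram $D\subsetneq C_S$ induces a decomposition $C_S = D\sqcup(C_S\setminus D)$ into sub-diagrams of $M$; whenever either half has contiguous endpoints, Part (1) produces a uniformly proper rectangular subtableau strictly inside $S$, contradicting minimality. The hardest piece will be to rule out the case in which both $D$ and $C_S\setminus D$ have non-contiguous endpoints, which I expect to address by exploiting how the greedy construction in Lemma~\ref{lemma: creating m-diagram from tableau} forces arcs of consecutive matchings $\mathcal{M}_i$ and $\mathcal{M}_{i+1}$ to share endpoints in the interior rows of $T$, propagating closure through an entangled chain of arcs and precluding $C_S$ from splitting along a non-contiguous partition of its endpoints.
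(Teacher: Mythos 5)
Your forward direction and the first two checks of your reverse direction are sound and track the paper's proof closely; indeed, your two-way count of $|\mathcal{M}_i\cap C|$ to equalize consecutive row lengths of $S$ is arguably cleaner than the paper's one-sided argument. The genuine gap is in your third check, where you must show that the left-justification of $S$ is column-strict. You assert that the shifted sub-diagram $C$ ``satisfies the structural conditions of Corollary~\ref{corollary: function varphi from tabs to m-diagrams}'' and then invoke injectivity of $\varphi$. But that corollary records \emph{necessary} properties of $\varphi(T)$, not a characterization of the image, so verifying them does not place $C$ in the image of $\varphi$ restricted to rectangular SYT --- and being in that image is essentially the statement you are trying to prove. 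Injectivity of $\varphi$ is also nowhere established in the paper (it is true, but requires recovering the row assignment from the degree and chain structure of the arcs, which you would have to supply). The paper instead argues directly: if the left-justified tableau $S'$ has a first non-increasing column $j$, with $S'_{ij}<S'_{(i-1)j}$, then fewer than $j$ entries of row $i-1$ of $S$ are smaller than $S'_{ij}$; since closure of $C$ forces the $j-1$ earlier entries of row $i$ of $S$ to be matched inside $S$, consuming $j-1$ of those smaller entries, the greedy rule of Lemma~\ref{lemma: creating m-diagram from tableau} must match $S'_{ij}$ to an entry outside $S$, contradicting closure. You should replace the appeal to properties of $\varphi$ with this (or an equivalent) direct argument.

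On part (2) you are more scrupulous than the paper, which dismisses it as a simple consequence of part (1), and you correctly isolate the delicate point: if $C_S$ is not a component it contains a proper sub-diagram $D$, and $D$ (or its complement $C_S\setminus D$, which you rightly note is again a sub-diagram) need not be uniform, so part (1) cannot be applied immediately. However, you only say you ``expect to address'' this case by propagating closure through chains of arcs; as written this step is not proved, so your part (2) remains incomplete. To close it you need an actual argument that some proper sub-diagram of $C_S$ with contiguous endpoints exists --- for instance by examining a maximal contiguous block of endpoints of whichever of $D$, $C_S\setminus D$ avoids the leftmost endpoint of $C_S$ and using the no-crossing and closure conditions --- or else a direct construction of a uniformly proper subtableau of $S$ from a non-uniform $D$.
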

\begin{proof}
Note that (2) is a simple consequence of (1) and so we only prove (1).

Fix a uniformly proper rectangular subtableau $S$ of $T$ and let $C_S$ be the collection of arcs in $M$ with an endpoint that is an entry of $S.$ By Lemma \ref{lem: endpoints in S}, any arc in $C_S$ must have both endpoints in $S.$ Note that this means that no arc of $M$ with an endpoint outside of $S$ can cross an arc in $C_S.$ Thus, we have that $C_S$ is a sub-diagram of $M.$ As $S$ is a \textit{uniformly} proper subtableau of $T,$ it follows that the endpoints of $C_S$ are all adjacent and so $C_S$ is a uniform sub-diagram of $M.$

Now, fix a uniform sub-diagram $C$ of $M.$ Suppose that the endpoints of arcs in $C$ are $\{k, k+1, \ldots, k+\ell\}$ and consider the subtableau $S$ of $T$ with entries $\{k, k+1, \ldots, k+\ell\}.$ We show that $S$ is a uniformly proper rectangular subtableau. All parts of this proof amount to showing that violating any part of the definition of a uniformly proper rectangular subtableau is equivalent to having an arc in $M$ with one endpoint an entry in $S$ and the other endpoint not an entry in $S$.
\begin{enumerate}
    \item Suppose $S$ has $j$ rows and $T$ has $n$ rows. If $j<n$ then some entry in the $(j+1)^{th}$ row of $T$ is connected to an entry $i$ in $S$ by an arc in $M$. This contradicts the hypothesis that $C$ is a sub-diagram unless $j=n$ as desired.
    \item Similarly, if any row in $S$ is shorter than the row immediately preceding it, there must be an arc in $M$ that connects a boundary vertex outside of $k+1, k+2, \ldots, k+\ell$ to a boundary vertex in the set. Thus, all rows in $S$ must have the same length.
    \item  Suppose that when $S$ is left-justified, it is not standard. Since $S$ is a subtableau of $T,$ the rows of $S$ are increasing and thus if $S$ is not standard when left-justified, there must be some column that is not increasing. Choose $j$ to be the smallest value such that the $j^{th}$ column of $S$ is not increasing. Then for some row $i$ we have that $S_{ij}<S_{(i-1)j}$ and for all $j'<j,$ the columns of $S$ are increasing. As entries to the left of $S_{ij}$ are of lower value and are in increasing columns, the algorithm in Lemma \ref{lemma: creating m-diagram from tableau} matches each entry $S_{ij'}$ for $j'<j$ to some entry on row $i-1$ of $S.$ However, we then have that the algorithm cannot match $S_{(i+1)j}$ to any entry on row $i$ of $S.$ Thus, the arc starting at $S_{ij}$ in $M$ has an endpoint that is not an entry of $S.$ This contradicts the definition of $S.$
    
    Let $S'$ be the tableau obtained by left-justifying $S$ and say $S'$ is not standard. Since $S$ is a subtableau of $T$, the rows of $S$ (hence $S'$) are increasing. Therefore there is a column of $S'$ that is not increasing. Let $j$ be the smallest such column. Then for some row $i$ we have $S'_{ij}<S'_{(i-1)j}$. This means there are fewer than $j$ entries in the $(i-1)^{st}$ row of $S'$ (hence $S$) that are smaller than $S'_{i,j}$, so the algorithm in Lemma \ref{lemma: creating m-diagram from tableau} cannot match $S'_{ij}$ to any entry on row $i-1$ of $S.$ Thus, the arc starting at $S'_{ij}$ in $M$ has an endpoint that is not an entry of $S.$ This contradicts the definition of $S.$ 
\end{enumerate}
This proves that $S$ must be a uniformly proper rectangular subtableau and therefore the map between these subtableaux and uniform sub-diagrams is a bijection.
\end{proof}

Given the correspondence between the internal structure of $T$ and the geometric structure of $\phi(T),$ we now consider how the promotion operation on $T$ affects these structures. The following result shows that the uniformly proper subtableaux of $T$ are preserved under the action of promotion $T.$ Our key insight here is that the ``sliding path'' of the empty box during promotion can only move horizontally through a uniformly proper rectangular subtableau, as in the schematic of Figure \ref{fig: sliding path of b}.

\begin{figure}[h]
    \centering
    \begin{tikzpicture} [scale=.55]
    \node at (.5,5.5) {b};
    \node at (2.75,4.5) {$T_1$};
    \node at (6.5,3.5) {$S$};
    \node at (9,1) {$T_2$};
\node at (5.5,-.5) {$T$};

\draw[thick,densely dotted] (0,5)--(1,5)--(1,6);
\draw[thick, densely dotted] (10,0)--(10,1)--(11,1);

\draw[thick](0,0)--(0,6)--(11,6)--(11,0)--(0,0);
\draw[thick] (1,0)--(1,2)--(3,2)--(3,3)--(4,3)--(4,5)--(5,5)--(5,6);
\draw[thick] (7,0)--(7,2)--(9,2)--(9,3)--(10,3)--(10,5)--(11,5);
\begin{scope}[thick,color=Cerulean, decoration={markings,mark=at position 0.5 with {\arrow{>}}}]
\draw[postaction={decorate}] (0.75,5.5)--(1.5,5.5);
\draw[postaction={decorate}] (1.5,5.5)--(1.5,3.5);
\draw[postaction={decorate}] (1.5,3.5)--(2.5,3.5);
\draw[postaction={decorate}] (2.5,3.5)--(2.5,2.5);
\draw[postaction={decorate}] (2.5,2.5)--(10.5,2.5);
\draw[postaction={decorate}] (10.5,2.5)--(10.5,.5);
\end{scope}
    \draw [-stealth](11.5,3) -- (12.5,3);
\node at (15,4) {$T'_1$};
\node at (18.5,3) {$S'$};
\node at (22,1.5) {$T'_2$};
\node at (18.5,-.5) {$P(T)$};

\draw[thick](13,0)--(13,6)--(24,6)--(24,0)--(13,0);
\draw[thick] (14,0)--(14,2)--(15,2)--(15,3)--(17,3)--(17,5)--(18,5)--(18,6);
\draw[thick] (20,0)--(20,2)--(21,2)--(21,3)--(23,3)--(23,5)--(24,5);
    \end{tikzpicture}
    \caption{Sliding path of $b$ through $T$ and structure of $P(T)$}
    \label{fig: sliding path of b}
\end{figure}

\begin{lemma}{\label{lem: promotion preserves subtableau}} Let $T$ be an SYT with uniformly proper rectangular subtableau $S$ so that $T=T_1ST_2$ with $|T_1|\geq 1.$ Then, $P(T) = T_1'S'T_2'$ where $T_1'T_2'\equiv P(T_1T_2)$ and $S'$ is a uniformly proper rectangular subtableau of $P(T)$ that is uniformly equivalent to $S.$
\end{lemma}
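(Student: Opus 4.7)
The plan is to track the jeu de taquin slide path during promotion of $T$ and show it decomposes into three phases: slides within $T_1$, a horizontal traversal of a single row of $S$, and slides within $T_2$. As a preliminary, since $S$ is uniformly equivalent to an SYT, its content is a block of consecutive integers $\{m+1,\ldots,m+|S|\}$, and since $S$ lies horizontally between $T_1$ and $T_2$ in every row of $T$, one can show the contents of $T_1,S,T_2$ are exactly $\{1,\ldots,m\}$, $\{m+1,\ldots,m+|S|\}$, and $\{m+|S|+1,\ldots,|T|\}$. Thus every $T_1$-entry is strictly less than every $S$-entry, which is strictly less than every $T_2$-entry.

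For the first phase, write $\ell_i$ for the length of row $i$ of $T_1$ and $w$ for the common row length of $S$. At any position of $T_1$ whose below neighbor is in $T_1$, the smaller neighbor is inside $T_1$, so the slide stays in $T_1$ until reaching a position $(i_0,\ell_{i_0})$ whose below neighbor is outside $T_1$; since $T_1$'s row lengths are non-increasing, this occurs precisely when $\ell_{i_0}>\ell_{i_0+1}$ (or when $i_0$ is the last row). At such a position both outside neighbors exceed every $T_1$-entry, so the empty box slides right into $S$ at $(i_0,\ell_{i_0}+1)$. For the second phase, let $S^*$ denote the left-justified SYT equivalent to $S$ and set $d:=\ell_{i_0}-\ell_{i_0+1}\geq 1$. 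At any interior position $(i_0,j)$ with $\ell_{i_0}+1\leq j\leq \ell_{i_0}+w-1$ whose below neighbor is in $S$, the right and below neighbors are $S^*_{i_0,\,j-\ell_{i_0}+1}$ and $S^*_{i_0+1,\,j-\ell_{i_0}+d}$; the SYT inequalities in $S^*$ combined with $d\geq 1$ force the right neighbor to be strictly smaller. When the below neighbor is in $T_2$, the right neighbor in $S$ is automatically smaller. Thus the empty box slides right across the entire row $i_0$ of $S$ to $(i_0,\ell_{i_0}+w)$, where both remaining neighbors are in $T_2$ (or absent), and the third phase proceeds entirely in $T_2$. Consequently row $i_0$ of $S$ shifts one column left while the other rows of $S$ stay fixed; after decrementing all entries and inserting the maximum, the image of $S$ inside $P(T)$ is a uniformly proper rectangular subtableau $S'$ of content $\{m,\ldots,m+|S|-1\}$ with the same internal arrangement as $S$, hence uniformly equivalent to $S$.

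To verify $T_1'T_2'\equiv P(T_1T_2)$, I would argue that the slide path in $T$, restricted to $T_1\cup T_2$ and with $T_2$-coordinates shifted left by $w$, coincides with the slide path of $P(T_1T_2)$ after relabeling $T_1T_2$ as an SYT. The $T_1$-portions agree by the first-phase argument. The critical step is the exit decision: in $T_1T_2$ at $(i_0,\ell_{i_0})$ the comparison is between the right neighbor $T_{i_0,\,\ell_{i_0}+w+1}$ (the first $T_2$-entry of row $i_0$, now at $T_1T_2$-column $\ell_{i_0}+1$) and the below neighbor $T_{i_0+1,\,\ell_{i_0}+w}$ (at $T_1T_2$-position $(i_0+1,\ell_{i_0})$); in $T$ at the exit position $(i_0,\ell_{i_0}+w)$ the comparison is between these very same two entries. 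So both paths make the same choice and afterwards proceed through corresponding positions of $T_2$, comparing identical entries until reaching the bottom-right corner. Combined with the matching decrement and maximum-insertion, this gives the equivalence. The main obstacle is precisely this slide-path matching: one must track the column-shift between $T$ and $T_1T_2$ carefully and confirm that the ``exit from $S$'' comparison in $T$ involves the same two entries of $T$ as the ``exit from $T_1$'' comparison in $T_1T_2$.
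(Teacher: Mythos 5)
Your proposal is correct and follows essentially the same route as the paper's proof: both track the jeu de taquin sliding path, show the empty box can only traverse $S$ horizontally along a single row (using that $S$ is uniformly proper and rectangular), and then identify $T_1'T_2'$ with $P(T_1T_2)$ by observing that the exit-from-$S$ comparison in $T$ involves the very same two entries as the exit-from-$T_1$ comparison in $T_1T_2$. Your coordinate bookkeeping with $S^*$ and $d$ is just a more explicit version of the paper's qualitative argument, and the one loosely justified step (why the box slides right rather than down upon first leaving $T_1$) is already covered by your own phase-two comparison applied with $c=1$, so there is no genuine gap.
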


\begin{proof}
Consider the sliding path of the empty box $b$ during the promotion of $T$.

Each element of $T_1$ is less than every element of $S$ so $b$ can only slide into $S$ when its neighbours to the right and below are both in $S$ or outside of the tableau. Thus, $b$ can only slide into $S$ when it reaches the bottom row of a rectangular sub-shape of $T_1$. If $b$ reaches the bottom row of $T_1$ then $b$ can only slide horizontally in $S$. If $b$ does not reach the bottom row of $T_1$ then consider its two neighbours in $S$ as shown in Figure \ref{fig: comparing entries as b slides}. Since $S$ is uniformly proper, the entry in the box below $b$ is greater than the entry in the box to the right of $b$ and thus $b$ slides to the right. The same argument shows that $b$ only slides horizontally until it is either at the end of the row or at least one of its neighbours is in $T_2$. Suppose $b$ has one neighbour in $S$ and one neighbour in $T_2$. Each row of $S$ is the same length so the box below $b$ is in $T_2$.  All entries in $T_2$ are larger than the entries in $S$ so $b$ continues to slide horizontally. Thus $b$ slides horizontally until both of its neighbours are in $T_2$ or until it is at the end of the row, along a path like that in the schematic of Figure \ref{fig: sliding path of b}.

We have proven that $b$ only slides horizontally in $S$.  This means that the only way promotion changes $S$ is by sliding one row to the left by one box and decreasing entries by 1. Call the resulting tableau $S'$.  Then $S'$ is equivalent to $S$ by construction. Moreover, they are uniformly equivalent because 1 is subtracted from each entry of $S$ to form $S'$.

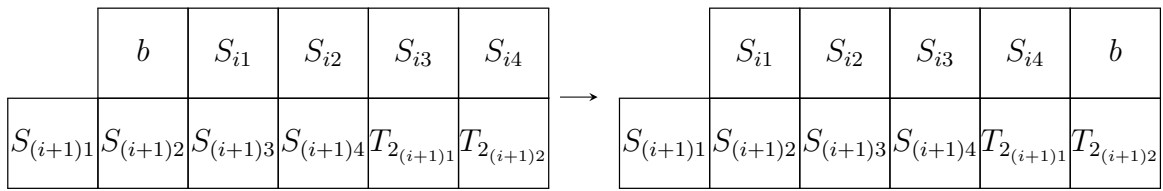
\begin{figure}[H]
    \centering
    \ytableausetup {boxsize=1.182cm}
    \ytableausetup{notabloids}
\begin{ytableau}
\none & b & S_{i1} & S_{i2} & S_{i3} & S_{i4} \\
S_{(i+1)1} & S_{(i+1)2} & S_{(i+1)3} & S_{(i+1)4} & T_{2_{(i+1)1}} &  T_{2_{(i+1)2}}
\end{ytableau}
\begin{tikzpicture}
\draw [-stealth](10,2.75) -- (10.5,2.75);
\end{tikzpicture}
\ytableausetup{notabloids}
\begin{ytableau}
\none & S_{i1} & S_{i2} & S_{i3} & S_{i4} & b\\
S_{(i+1)1} & S_{(i+1)2} & S_{(i+1)3} & S_{(i+1)4} & T_{2_{(i+1)1}} & T_{2_{(i+1)2}}
\end{ytableau}
\caption{Comparing entries as $b$ slides through $S$}
    \label{fig: comparing entries as b slides}
\end{figure}

We have shown $S'$ is a uniformly proper rectangular subtableau of $P(T)$ and so we can write $P(T)$ as the horizontal concatenation $P(T)=T_1'S'T_2'.$ We now show $T_1'T_2'\equiv P(T_1T_2)$ by again considering the sliding path of $b.$ Let $P_1$ denote the part of the sliding path of $b$ that is in $T_1$, $P_2$ the part of the path in $S$, and $P_3$ the part of the path in $T_2$.

Consider the tableau $T_1T_2$ and the sliding path of the empty box $b$ during its promotion. Since each element of $T_1$ is smaller than every element of $T_2$ the empty box $b$ slides into $T_2$ when its neighbours to the right and below are in $T_2$ or outside of the tableau entirely. Promotion is defined from pairwise comparisons so the sliding path of $b$ in $T_1$ is exactly $P_1$. After sliding along $P_1$ the empty box $b$ has neighbors $c_1, c_2$, to the right and below $b$ respectively, that are either in $T_2$ or outside of the tableau. Since $b$ only slides horizontally in $S$ we know that after sliding along the path $P_2$ in $T$, the same entries $c_1, c_2$ neighbour $b$. Thus, the remainder of sliding path of $b$ in $T_1T_2$ is $P_3$ and so the whole sliding path is $P_1P_3$.  It follows that $T_1'T_2'\equiv P(T_1T_2)$ as desired.
\end{proof}

Thus, we have that the uniformly proper rectangular subtableaux of $T$ that do not contain $1$ are preserved during promotion, though their entries are all decremented by 1. A simple consequence of this result along with the correspondence in Lemma \ref{lem: components subtableaux bijection} is that uniform subdiagrams of the $m-$diagram $\phi(T)$ that do not contain arcs ending at $1$ are also in some sense preserved during promotion: for each uniform component $C$ of $\phi(T)$ that does not have 1 as an endpoint, $\rho(C)$ is a uniform subdiagram of $\phi(P(T)).$

\begin{corollary}\label{cor: left translation}
    If $C$ is a uniform sub-diagram of $\varphi(T)$ that does not contain the vertex 1, $\varphi(P(T))$ contains the uniform sub-diagram $\rho(C),$ where $\rho$ is the rotation operation defined in Section \ref{sec: webs}.
\end{corollary}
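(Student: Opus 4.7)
The plan is to combine Lemma \ref{lem: components subtableaux bijection} (the bijection between uniformly proper rectangular subtableaux of $T$ and uniform sub-diagrams of $\varphi(T)$) with Lemma \ref{lem: promotion preserves subtableau} (promotion preserves uniformly proper subtableaux), and then argue that the induced correspondence on endpoints is exactly $\rho$.

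First, I would translate the hypothesis from $m$-diagrams back to tableaux. By Lemma \ref{lem: components subtableaux bijection}, the uniform sub-diagram $C$ of $\varphi(T)$ corresponds to a unique uniformly proper rectangular subtableau $S$ of $T$, whose entries are exactly the endpoints of $C$. Since $C$ does not contain the vertex $1$, the entry $1$ is not in $S$, and because entries of $S$ are larger than all of $T_1$ and smaller than all of $T_2$ in the decomposition $T = T_1 S T_2$, the entry $1$ lies in $T_1$. In particular $|T_1| \ge 1$, so Lemma \ref{lem: promotion preserves subtableau} applies and yields $P(T) = T_1' S' T_2'$ where $S'$ is a uniformly proper rectangular subtableau of $P(T)$ that is uniformly equivalent to $S$. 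Inspecting the proof of Lemma \ref{lem: promotion preserves subtableau}, the entries of $S$ are only shifted within $S$ (horizontally along one row) and then globally decremented by $1$, so the content of $S'$ is exactly $\{s - 1 : s \in S\}$.

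Next, I would appeal again to Lemma \ref{lem: components subtableaux bijection}, applied to $P(T)$: the uniformly proper rectangular subtableau $S'$ corresponds to a uniform sub-diagram $C'$ of $\varphi(P(T))$, whose arcs are precisely the arcs of $\varphi(P(T))$ with both endpoints in $S'$ (this latter fact is Lemma \ref{lem: endpoints in S}). It remains to identify $C'$ with $\rho(C)$. This is the step I expect to require the most care, but it should reduce to the observation that the algorithm in Lemma \ref{lemma: creating m-diagram from tableau} only uses the relative order of entries within two consecutive rows and the left-to-right order of boxes. Since $S$ and $S'$ are uniformly equivalent and have the same underlying shape, the algorithm produces combinatorially the same collection of arcs on the two subtableaux; re-labeling an arc $(a,b)$ in $C$ by its image in $S'$ gives the arc $(a-1, b-1)$, and since no endpoint of $C$ equals $1$, this is exactly $\rho((a,b))$ by Definition \ref{definition: rotation}. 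Hence $C' = \rho(C)$.

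Finally, I would note that $\rho(C)$ is indeed a \emph{uniform} sub-diagram of $\varphi(P(T))$: its endpoints are the endpoints of $C$ each decreased by $1$, hence still a consecutive block on the boundary; it is noncrossing within each matching by Lemma \ref{lemma: rotation preserves perfect and noncrossing matchings}; and it is closed under endpoints and uncrossed by arcs outside of it because $C' = \rho(C)$ already has this property as a sub-diagram of $\varphi(P(T))$. The main technical obstacle is the middle step — verifying that the algorithmic construction of $\varphi$ commutes with ``uniform shift of entries'' when restricted to a uniformly proper rectangular subtableau — but this is really just unwinding that the construction in Lemma \ref{lemma: creating m-diagram from tableau} is order-theoretic, combined with Lemma \ref{lem: endpoints in S} which confines all interactions of $S$ (resp.\ $S'$) to within itself.
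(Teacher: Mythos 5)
Your proposal is correct and follows essentially the same route as the paper: translate $C$ to a uniformly proper rectangular subtableau via Lemma \ref{lem: components subtableaux bijection}, apply Lemma \ref{lem: promotion preserves subtableau} to get a uniformly equivalent subtableau of $P(T)$ with entries decremented by one, translate back, and observe that the order-theoretic nature of the algorithm in Lemma \ref{lemma: creating m-diagram from tableau} forces the new arcs to be the left translation $\rho(C)$. Your write-up is in fact slightly more careful than the paper's at the final identification step, but the argument is the same.
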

\begin{proof}
By the correspondence given in Lemma \ref{lem: components subtableaux bijection}, the uniform sub-diagram $C$ corresponds to a uniformly proper subtableau $T_C$ of $T.$ As $C$ does not contain an arc that ends at 1, we have that 1 is not an entry of $T_C.$ Thus, $T$ can be written as $T=T_1T_CT_2$ with $|T_1|\neq0.$ By Lemma \ref{lem: promotion preserves subtableau}, $P(T)$ contains the subtableau $T_C'$ whose entries are the same as $T_C$ with one subtracted from each entry. Again by the correspondence in Lemma \ref{lem: components subtableaux bijection}, $T_C'$ corresponds to a uniform sub-diagram $C'$ of $\varphi(P(T)).$ As $T_C$ and $T_C'$ are uniformly equivalent it is clear from the algorithm given in Section \ref{sec: webs} that entries of $T_C'$ will be matched in the same order as the entries of $T_C$ and thus the arcs in $C'$ will simply be the left translation of the arcs of $C.$ It follows that $C' = \rho(C),$ concluding the proof.
\end{proof}

Our previous results show that the only component of the $m-$diagram $\phi(T)$ that is \emph{not} preserved during promotion is the component $C$ that has an arc ending at $1.$ We wish to describe how the structure of $\phi(T)$ evolves as the promotion operator is applied to $T$. Motivated by this, we examine how promotion alters substructures of $T$ that correspond to the components of $\phi(T).$ 

Note that for each component $C$ of $\phi(T)$ we can construct a rectangular tableau $T_C$ whose content we denote  $\partial C$ and whose $m-$diagram is $C$ itself.  In fact, $T_C$ comes from left justifying the collection of boxes in $T$ with entries $\partial C$. The tableau $T_C$ is necessarily a collection of boxes inside $T$ so we now characterize how the structure of $T_C$ within $T$ changes as we apply the promotion operator.

\begin{definition}[Component Promotion]\label{def:component-promotion}
    Let $C$ be a component of $\phi(T)$ and construct the corresponding tableau $T_C.$ With a slight abuse of language and notation, we define the promotion $P(T_C)$ of $T_C$ as follows:

    \begin{enumerate}
        \item If $1\notin\partial C,$ form $P(T_C)$ by decrementing each entry of $T_C$ by 1.
        \item If $1\in\partial C,$ form $P(T_C)$ by
        \begin{enumerate}
            \item removing 1 from the top left corner of $T_C,$
            \item performing the sequence of sliding moves outlined in Definition \ref{definition: promotion},
            \item decrementing each entry of $T_C$ by 1,
            \item filling the empty box with $|T|.$
        \end{enumerate}
    \end{enumerate}
\end{definition}

\begin{lemma}\label{lem:component-promotion-tableau}
    For each component $C$ of $\phi(T),$ left-justifying the collection of boxes with content $\rho(\partial C)$ in $P(T)$ produces the tableau $P(T_C)$.
\end{lemma}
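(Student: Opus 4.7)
The plan is to split into two cases based on whether $1 \in \partial C$ and, in each case, trace how the promotion of $T$ affects the boxes whose entries lie in $\partial C$, showing the left-justified result matches $P(T_C)$ as prescribed by Definition~\ref{def:component-promotion}.

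For the case $1 \notin \partial C$: Definition~\ref{def:component-promotion}(1) says $P(T_C)$ is $T_C$ with each entry decremented by 1, so I must verify that the boxes of $P(T)$ with entries $\rho(\partial C) = \partial C - 1$ left-justify to this. When $C$ is a uniform component, the correspondence in Lemma~\ref{lem: components subtableaux bijection} identifies $C$ with a uniformly proper rectangular subtableau $S$ of $T$; writing $T = T_1 S T_2$ with $|T_1| \geq 1$ (valid since $1 \in T_1$), Lemma~\ref{lem: promotion preserves subtableau} gives $P(T) = T_1' S' T_2'$ where $S'$ equals $S$ with each entry decremented by 1. Left-justifying the boxes of $P(T)$ with entries in $\partial C - 1$ then produces $S' = P(T_C)$. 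When $C$ is non-uniform, its long arcs must nest over other (necessarily uniform) sub-diagrams of $\varphi(T)$; by inducting on the number of nested sub-components and applying Lemma~\ref{lem: promotion preserves subtableau} or Corollary~\ref{cor: left translation} to each in turn, one reduces the problem to the uniform case handled above.

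For the case $1 \in \partial C$: The sliding path of the empty box $b$ during promotion of $T$ begins at the top-left corner, which is a box of $T_C$. I would trace this path and show that the slides performed at positions corresponding to boxes of $T_C$ coincide exactly with those of the standalone promotion of $T_C$. At each such step, the comparison determining $b$'s next move depends on its right and below neighbors; when one of these neighbors is not in $\partial C$, it belongs to a component nested strictly inside an arc of $C$, and the nesting structure forces its value into a range that yields the same slide direction as in the standalone promotion of $T_C$. After all slides, $b$ rests at a position corresponding to the final empty box in the standalone promotion of $T_C$; decrementing all entries and filling $b$ with $|T|$ then matches Definition~\ref{def:component-promotion}(2).

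The hardest step will be handling non-uniform components, because Lemma~\ref{lem: promotion preserves subtableau} applies only when the subtableau's endpoints are contiguous. The key structural fact to establish is that every ``gap'' in $\partial C$ for a non-uniform $C$ must be filled by endpoints of components nested strictly inside $C$'s long arcs, and these nested components can themselves be decomposed recursively. By handling nested components first (as rigid blocks preserved by Lemma~\ref{lem: promotion preserves subtableau}) and then analyzing the reduced configuration, both the non-uniform subcase of Case 1 and the analogous wrapped situation in Case 2 reduce to arguments that parallel the proof of Lemma~\ref{lem: promotion preserves subtableau} itself.
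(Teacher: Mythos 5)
Your case split and the structural facts you isolate are the right ones, and a completed version of your plan would go through, but you are working much harder than the paper because you miss its one organizing device: the subtableau $S$ of $T$ consisting of \emph{all} boxes with entries in $[\min\partial C,\max\partial C]$. Its entry set is $\partial C$ together with the endpoint sets of everything nested under $C$'s arcs, hence contiguous, so $S$ is the minimal uniformly proper rectangular subtableau containing $\partial C$ and Lemma~\ref{lem: promotion preserves subtableau} applies to it in one shot. In your first case this makes the non-uniform subcase immediate: all of $S$ is rigidly decremented by $1$ inside $P(T)$, hence so is the sub-collection of boxes with content $\partial C$, and no induction on nested components is needed --- which is just as well, since your sketch never says what ``the reduced configuration'' is after the nested blocks have been handled. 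In your second case the paper writes $T=SR$ and (when $C$ is non-uniform) $S=T_{C_1}UT_{C_2}$, and reads everything off from the already-proven equivalence $T_1'T_2'\equiv P(T_1T_2)$, whereas you propose to re-trace the sliding path box by box.

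One claim in that re-trace needs repair: it is not true that at each step the comparison ``yields the same slide direction as in the standalone promotion of $T_C$.'' When the empty box abuts a nested block $U$, its neighbours in $T$ are entries of $U$, not the entries of $T_{C_2}$ it would see in the left-justified $T_C$, and the two paths are genuinely different --- the path in $T$ contains extra horizontal runs across each $U$. What is true, and what you actually need, is that the \emph{restriction} of the path to the boxes of $T_C$ agrees with the standalone path; proving that requires exactly the bookkeeping in the proof of Lemma~\ref{lem: promotion preserves subtableau} (the box crosses $U$ horizontally and re-emerges with the same pair of neighbours $c_1,c_2$ it would have had in $T_{C_1}T_{C_2}$). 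So your route is viable, but by the time you have justified the steps you defer, you will have reproved Lemma~\ref{lem: promotion preserves subtableau}; invoking it through the enveloping block $S$ is shorter and already available.
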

\begin{proof}
    Let $S$ be the subtableau of $T$ consisting of all boxes of $T$ with entries $j$ such that $\min\{i\in \partial C\}\leq j\leq\max\{i\in\partial C\}.$ Note that $S$ is the minimal uniformly proper rectangular subtableau of $T$ the contains each element of $\partial C$ so, for instance, if $\partial C$ contains both $1$ and $n$ then $S$ will be all of $T$.  The collection of boxes in $S$ with content $\partial C$ form the tableau $T_C$ once left justified.
    
    First, suppose $1\notin \partial C$ and so $1\notin S.$ By Lemma \ref{lem: promotion preserves subtableau}, subtracting 1 from each entry of $S$ yields a uniformly proper rectangular subtableau of $P(T).$ In particular, the boxes with content $\rho(\partial C)$ within $P(T)$ form the tableau $P(T_C)$ as defined in Definition \ref{def:component-promotion}.

    Now, suppose $1\in\partial C.$ Then, we can write $T$ as the horizontal concatenation $T= SR$ where $R$ is the uniformly proper rectangular subtableau of $T$ with all entries greater than the maximum element of $\partial C.$ By Lemma \ref{lem: promotion preserves subtableau}, the promotion $P(T)$ can be written as the horizontal concatenation $P(T) = S_1R'S_2$ where $R'$ is the subtableau of $P(T)$ formed by subtracting 1 from each entry of $R$ and $S_2$ consists of only the box containing $n.$ Moreover, we have the equivalence $S_1S_2\equiv P(S).$ Note that replacing the maximum entry of $P(S)$ with the maximum entry $n$ of $T$  yields $S_1S_2.$
    
    If $C$ is a uniform component of $\phi(T),$ then $S=T_C$ and so $S_1S_2 = P(T_C)$ as defined in Definition \ref{def:component-promotion}. If $C$ is not a uniform component of $\phi(T)$ then the boxes in $S$ whose entries are not in $\partial C$ form $U,$ a uniformly proper rectangular subtableau of $T.$ Let $T_{C_1}$ be the collection of boxes in $S$ to the left of $U$ and $T_{C_2}$ the boxes of $S$ to the right of $U$ so the $S$ is the horizontal concatentation $S = T_{C_1}UT_{C_2}.$ Then, by Lemma \ref{lem: promotion preserves subtableau}, $P(S) = T_{C_1}'U'T_{C_2}'$ where $U'$ is formed by subtracting 1 from each entry of $U$ and $T_{C'}T_{C_2}'\equiv P(T_C).$ Replacing the maximum entry of $T_{C_2}'$ with the maximum entry $n$ of $T$ yields $P(T_C).$ This completes the proof.
\end{proof}

\begin{remark}\label{rem:k-fold-component}
    An equivalent definition of component promotion when $1\in\partial C$ can be formulated by constructing a specific SYT to represent the component $C.$ Suppose $C$ has the first $\ell$ vertices of $\phi(T)$ as endpoints, so $\partial C = \{t_1<t_2<\ldots<\ldots<t_m\}$ with $t_1=1, \ldots, t_\ell=\ell.$ For each $k\leq \ell,$ we may determine the $k-$fold component promotion of $T_{C}$ as follows:
    \begin{enumerate}
        \item Form the SYT $T_C'$ by replacing entry $t_i$ with $i.$
        \item Compute the $k-$fold promotion $P^k(T_C')$ using the algorithm in Definition \ref{definition: promotion}.
        \item Replace entry $i$ in $P^k(T_C')$ with $\rho^k(t_i).$
    \end{enumerate}
\end{remark}

The results of Lemma \ref{lem:component-promotion-tableau} have a natural geometric interpretation. As in Corollary \ref{cor: left translation}, we see that the only component of $\phi(T)$ whose structure is not preserved in promotion is the component $C_1$ with $1\in\partial C_1.$ If $C$ is any other component of $\phi(T),$ we see that $\rho(C)$ is a component of $\phi(P(T)).$ For instance,  consider our running example in Figure~\ref{fig:promotion-vs-rotation}. All three components nested under the arc $(1,18)$ in $\phi(T)$ look exactly the same in $\rho(\phi(T))$ after rotation; however, the component containing arc $(1,18)$ in $\phi(T)$ looks different both from the rightmost component in $\rho(\phi(T))$ and from the rightmost component of $\phi(P(T))$.

Two conclusions follow:
\begin{enumerate}
    \item Not every component $C$ of $\phi(T)$ gives rise to a component  $\rho(C)$ of $\phi(P(T)).$ 
    \item Nonetheless, if $C$ is the component of $\phi(T)$ containing boundary point $1$, there is a component of $\phi(P(T))$ with exactly the same endpoints as $\rho(C)$.  Moreover, this component is $\phi(P(T_C))$ with $P(T_C)$ as defined in Definition \ref{def:component-promotion}.
\end{enumerate}.  
The next corollary records these observations.

\begin{corollary}\label{cor:endpoints are rotations}
    Choose a sub-diagram $C$ of $\phi(T).$ Suppose that $\partial C = \{a_{i_k}, b_{i_k}\}_{k=1}^s$ is the set of endpoints of $C$ and $T_C$ is the tableau corresponding to $C.$
    \begin{enumerate}
        \item The arcs with endpoints $\rho(\partial C)$ in $\varphi(P(T))$ form a sub-diagram of $\varphi(P(T)).$ 
        \item If $C$ is a component, the arcs with endpoints $\rho(\partial C)$ in $\phi(P(T))$ form the component $\phi(P(T_C)).$
    \end{enumerate}
\end{corollary}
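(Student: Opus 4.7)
I would prove part (2) first and then deduce part (1) by decomposing a sub-diagram into components. Lemma~\ref{lem:component-promotion-tableau} already identifies $P(T_C)$ as the tableau obtained by left-justifying the boxes of $P(T)$ with entries in $\rho(\partial C)$; the remaining work for part (2) is to promote this tableau-level identification to an identification of arcs, and then to verify that the resulting sub-diagram is actually a component.

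\textbf{Proof of part (2).} Fix a component $C$ of $\phi(T)$. By Lemma~\ref{lem:component-promotion-tableau}, the boxes of $P(T)$ with content $\rho(\partial C)$ left-justify to $P(T_C)$; in particular every row of $P(T)$ contains the same number of such boxes. I would first argue that the arcs of $\phi(P(T))$ with both endpoints in $\rho(\partial C)$ coincide with the arcs produced by the algorithm of Lemma~\ref{lemma: creating m-diagram from tableau} applied to $P(T_C)$, via a row-by-row induction: since pairings depend only on the relative order of entries within consecutive rows, and the entries in $\rho(\partial C)$ occupy a consistent row-pattern given by Lemma~\ref{lem:component-promotion-tableau}, no arc of $\phi(P(T))$ can have exactly one endpoint in $\rho(\partial C)$ and the restricted pairings agree with those of $\phi(P(T_C))$. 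This exhibits $\phi(P(T_C))$ as a sub-diagram of $\phi(P(T))$. To upgrade this from sub-diagram to component, I would argue that $P(T_C)$ is itself minimal: by Lemma~\ref{lem: components subtableaux bijection}(2), $C$ being a component makes $T_C$ minimal, and then an iteration argument using Lemma~\ref{lem: promotion preserves subtableau} together with Haiman's theorem on the finite order of promotion on rectangular shapes transfers minimality from $T_C$ to $P(T_C)$. Minimality of $P(T_C)$ then forces $\phi(P(T_C))$ to have no proper sub-diagram, so it is a component of $\phi(P(T))$.

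\textbf{Proof of part (1).} For a general sub-diagram $C\subseteq\phi(T)$, decompose $C=C_1\cup\cdots\cup C_k$ into its components, so that $\rho(\partial C)=\bigsqcup_i \rho(\partial C_i)$. Part~(2) produces each $\phi(P(T_{C_i}))$ as a component of $\phi(P(T))$ with endpoint set $\rho(\partial C_i)$, and so the arcs of $\phi(P(T))$ with endpoints in $\rho(\partial C)$ form the disjoint union $\bigcup_i \phi(P(T_{C_i}))$. To verify this union is itself a sub-diagram I would check the two defining conditions: endpoint closure holds because each $\phi(P(T_{C_i}))$ is already a sub-diagram of $\phi(P(T))$, so every arc of $\phi(P(T))$ incident to a vertex of $\rho(\partial C_i)$ lies in $\phi(P(T_{C_i}))$; and no arc outside the union can cross an arc inside, because any such outside arc has both endpoints outside every $\rho(\partial C_i)$ and the component property of each $\phi(P(T_{C_i}))$ forbids crossings from outside.

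\textbf{Main obstacle.} The trickiest step is transferring the tableau-level identification of Lemma~\ref{lem:component-promotion-tableau} to an identification of arcs when $C$ is a non-uniform component, because in that case the boxes of $P(T)$ with content $\rho(\partial C)$ need not form a contiguous sub-block and Lemma~\ref{lem: endpoints in S} does not apply verbatim; one has to show that the matching algorithm still pairs entries of $\rho(\partial C)$ only among themselves in this non-contiguous setting. The secondary obstacle is the passage from minimality of $T_C$ to minimality of $P(T_C)$, which requires care because whether a tracked subtableau contains the current entry $1$ changes as promotion is iterated.
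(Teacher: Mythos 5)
The paper's own proof of this corollary is a single sentence---``This is a simple application of Lemma \ref{lem:component-promotion-tableau}''---so your outline is built on exactly the same key input as the paper and, if anything, does more work: you have correctly identified that passing from the tableau-level statement of Lemma \ref{lem:component-promotion-tableau} to the arc-level statement of the corollary, and from ``sub-diagram'' to ``component,'' are genuine steps that the phrase ``simple application'' glosses over. Your architecture (prove (2) first, deduce (1) by decomposing a sub-diagram into the components of $\phi(T)$ it contains) is sound, provided you add the observation that every sub-diagram is a disjoint union of components, which follows because the nonempty intersection of two sub-diagrams is again a sub-diagram.

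Two points need repair before this is a complete proof. First, your appeal to Lemma \ref{lem: components subtableaux bijection}(2) to conclude that $T_C$ is minimal only works for \emph{uniform} components: for a non-uniform component (such as $C_1$ in the running example), $T_C$ is the left-justification of a non-contiguous collection of boxes and is not a subtableau of $T$, so that lemma does not apply as cited. The correct route is to use $\varphi(T_C)=C$ and argue that a uniformly proper rectangular subtableau of $T_C$ would produce, via Lemma \ref{lem: components subtableaux bijection}(1) applied to $T_C$, a proper sub-diagram of $C$ that is also a sub-diagram of $\varphi(T)$, contradicting componenthood. Second, the two ``obstacles'' you flag are real and your proposal names them without closing them. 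For the arc-level transfer in the non-uniform case, mirror the proof of Lemma \ref{lem:component-promotion-tableau}: write the minimal uniformly proper subtableau containing $\partial C$ as $T_{C_1}UT_{C_2}$ with $U$ uniformly proper, apply Lemma \ref{lem: endpoints in S} to (the image of) $U$ inside $P(T)$ to see that the entries of $\rho(\partial C)$ are matched only among themselves, and then use the fact that the matching algorithm depends only on relative order to identify the restricted arcs with those of $\varphi(P(T_C))$. For the minimality transfer, your Haiman iteration must handle the step at which the tracked subtableau contains the entry $1$, where Lemma \ref{lem: promotion preserves subtableau} requires $|T_1|\geq 1$; one can switch to tracking the complementary rectangular piece at that step. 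With these repairs your outline becomes a correct, and considerably more detailed, version of the paper's argument.
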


\begin{proof}
    This is a simple application of Lemma \ref{lem:component-promotion-tableau}.
\end{proof}

From Corollary \ref{cor:endpoints are rotations}, we see that the partition of the boundary of $\phi(P(T))$ induced by its components is given by a left-translation of the partition of the boundary of $\phi(T).$ It follows that performing $|\cO(T)|$ left-translations maps the boundary of $\phi(T)$ to itself. We make this precise using a notion of \textit{rotational symmetry}.

\begin{definition}[Rotational Symmetry]
    Let $C_1, \ldots, C_k$ be the components of the $m-$diagram $\phi(T)$ and $\partial C_i$ the set of endpoints of component $C_i.$ Then $\phi(T)$ has order $N$ rotational symmetry if $N$ is the smallest positive integer such that the following equality holds: $$\{\rho^N(\partial C_1), \ldots, \rho^N(C_k)\} = \{\partial C_1, \ldots, \partial C_k\}.$$
\end{definition}

We now show that the orbit length $|\cO(T)|$ is a multiple of the rotational symmetry of its $m-$diagram.

\begin{lemma}\label{lem:rotational symmetry divides orbit}
    Suppose $\phi(T)$ has order $N$ rotational symmetry. Then $N$ divides the orbit length $|\cO(T)|.$
\end{lemma}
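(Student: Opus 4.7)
The plan is to apply Corollary~\ref{cor:endpoints are rotations} inductively to track how the endpoint partition of $\phi(T)$ evolves under iterated promotion, and then use the minimality in the definition of rotational symmetry together with a standard division-algorithm argument.

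First, I would fix the components $C_1,\ldots,C_k$ of $\phi(T)$ and let $\mathcal{P} = \{\partial C_1,\ldots,\partial C_k\}$. Applying Corollary~\ref{cor:endpoints are rotations}(1) once tells me that for each $i$, the set $\rho(\partial C_i)$ is the endpoint set of a sub-diagram of $\phi(P(T))$. Since the components partition the set of boundary points used by arcs, and since the images $\rho(\partial C_i)$ already partition this same set (shifted by $\rho$), each such sub-diagram must itself be a component of $\phi(P(T))$. Thus the endpoint partition of $\phi(P(T))$ is exactly $\rho(\mathcal{P}) := \{\rho(\partial C_1),\ldots,\rho(\partial C_k)\}$. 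Iterating, the endpoint partition of $\phi(P^m(T))$ is $\rho^m(\mathcal{P})$ for every $m \geq 0$.

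Next, let $L = |\mathcal{O}(T)|$, so $P^L(T) = T$ and therefore $\phi(P^L(T)) = \phi(T)$. In particular, the endpoint partition of $\phi(P^L(T))$ equals that of $\phi(T)$, which gives $\rho^L(\mathcal{P}) = \mathcal{P}$. So $L$ lies in the set $\mathcal{S} = \{m \geq 1 : \rho^m(\mathcal{P}) = \mathcal{P}\}$, which by definition contains $N$ as its minimal element.

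Finally, I would show $\mathcal{S}$ is closed under the division algorithm in the usual way: write $L = qN + r$ with $0 \leq r < N$. Since both $L$ and $N$ are in $\mathcal{S}$, we have $\rho^{qN}(\mathcal{P}) = \mathcal{P}$, hence $\rho^r(\mathcal{P}) = \rho^{L - qN}(\mathcal{P}) = \mathcal{P}$. If $r > 0$, this contradicts the minimality of $N$, so $r = 0$ and $N \mid L$, as claimed. The only step requiring any care is the first one — confirming that the image sub-diagrams $\rho(\partial C_i)$ really are \emph{components} (not just sub-diagrams) of $\phi(P(T))$ — but this is forced because the $\partial C_i$ partition the boundary vertices appearing in arcs, so their rotated images give a partition of the same vertex set of $\phi(P(T))$ into sub-diagrams, and any strict refinement into multiple components would contradict one of the $\partial C_i$ being a single component of $\phi(T)$ via the inverse rotation.
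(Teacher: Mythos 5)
Your proof is correct and follows essentially the same route as the paper: use Corollary~\ref{cor:endpoints are rotations} to show that the component-endpoint partition of $\phi(P(T))$ is the left-shift $\rho(\mathcal{P})$ of that of $\phi(T)$, iterate to get $\rho^{|\cO(T)|}(\mathcal{P}) = \mathcal{P}$, and finish with the division algorithm and the minimality of $N$. The only difference is cosmetic: your closing ``inverse rotation'' justification for why the sets $\rho(\partial C_i)$ are genuine components is unnecessary (and slightly shaky as stated, since promotion and rotation do not commute at the level of $m$-diagrams) --- part (2) of Corollary~\ref{cor:endpoints are rotations} asserts exactly this, so you may cite it directly.
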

\begin{proof}
    First, let $C_1,\ldots, C_k$ be the components of $\phi(T)$ and $C'_1, \ldots, C'_\ell$ the components of $\phi(P(T)).$ For any component $C$, let $\partial C$ be the set of endpoints of $C$. Then by construction, we see that $\sqcup_{i=1}^k \partial C_i$ and $\sqcup_{i=1}^{\ell} \partial C'_i$ are both partitions of the set $[n]=\{1,\ldots, n\}$. Furthermore, by Corollary \ref{cor:endpoints are rotations}, we have that $\bigsqcup_{i=1}^k \rho(\partial C_i)=\bigsqcup_{i=1}^{\ell} \partial C'_i.$ In other words, the partition coming from $\phi(P(T))$  is a one-unit leftward shift modulo $n$ of the partition coming from $\varphi(T)$. In particular, $k=\ell$. 
    
    Since $\varphi(P^{|\cO(T)|}(T))=\varphi(T)$, it follows that $\bigsqcup_{i=1}^k \rho^{|\cO(T)|}(\partial C_i)=\bigsqcup_{i=1}^{k} \partial C_i.$ Since $\phi(T)$ has order $N$ rotational symmetry, we know $N$ is the smallest positive leftward shift modulo $n$ that maps the partition coming from $\phi(T)$ onto itself. Now $|\mathcal{O}(T)| = Nq+r$ for some $q,r\in\mathbb{Z}$ with $0\leq r<N$, and this means $\bigsqcup_{i=1}^k \rho^{r}(\partial C_i)=\bigsqcup_{i=1}^{k} \partial C_i.$  Since $N$ is minimal, we must have that $r=0$ and so $N$ divides $|\mathcal{O}(T)|$.
\end{proof}

\begin{remark}
    For a rectangular tableau $T$, we have seen that the components of the $m-$diagram $\varphi(T)$ induce a specific partition $\Lambda_T=\sqcup_{C\in \varphi(T)} \partial C$ of the set $[|T|]$. Now, let $\Lambda$ be some partition of the set $[nm]$ for some $n,m\in\mathbb{N}$, and consider the set $$\mathcal{T}_{\Lambda}=\left\{T\in SYT(n\times m): \Lambda_T \in \{\Lambda, \rho(\Lambda), \rho^2(\Lambda), \rho^3(\Lambda), \rho^4(\Lambda), \rho^5(\Lambda), \ldots\}\right\}$$ 
    where we interpret $\rho^N(\Lambda)$ to mean applying the map $\rho$ to each subset in $\Lambda$. In other words, $\mathcal{T}_\Lambda$ is the set of tableaux that induce partitions which are rotationally equivalent to $\Lambda$.
    
    Then, for each $T\in\mathcal{T}_\Lambda$, we have that $\cO(T)\subseteq \mathcal{T}_\Lambda$ from Corollary \ref{cor:endpoints are rotations}. In this way, we get a lower bound $|\{\textup{nonempty } \mathcal{T}_\Lambda: \Lambda \textup{ is a partition of } [nm]\}|$ on the number of promotion orbits in $SYT(n\times m)$ and an upper bound $|\mathcal{T}_{\Lambda_T}|$ on $|\cO(T)|$.
\end{remark}

We now provide an algorithm for determining the orbit lengths of non-minimal tableaux. To do so, we introduce an integer $\ell$ that counts rotations until \emph{both} the boundaries of each component match (via rotational symmetry) \emph{and} the arcs above ``look the same."

\begin{theorem}\label{algorithm} 
    The orbit length $|\cO(T)|$ can be determined as follows:

    \begin{enumerate}
        \item Let $N$ be the order of rotational symmetry of $\phi(T).$
        \item Define $\ell$ as
        $$\ell = \min\{k\geq 1: \rho^{kN}(\partial C_i) = \partial C_j \implies P^{kN}(T_{C_i}) = T_{C_j} \forall i\}$$
        where $T_{C_i}$ is the tableau corresponding to $C_i.$
    \end{enumerate}
    
    Then, the equality $|\cO(T)|=\ell N$ holds.
\end{theorem}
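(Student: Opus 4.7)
The plan is to translate $P^{m}(T) = T$ into a condition on the components of $\phi(T)$ and then identify the minimal $m$ for which that condition holds. First I would invoke Lemma~\ref{lem:rotational symmetry divides orbit} to write $|\cO(T)| = kN$ for some integer $k \geq 1$, so the theorem reduces to verifying $k = \ell$.

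The key technical step is an induction on $j \geq 0$ showing that $\phi(P^{jN}(T))$ has components indexed by $C_1, \ldots, C_r$, where the $i$th component has boundary $\rho^{jN}(\partial C_i)$ and underlying rectangular tableau $P^{jN}(T_{C_i})$ in the sense of Definition~\ref{def:component-promotion}. The base case $j = 0$ is immediate, and the inductive step applies Corollary~\ref{cor:endpoints are rotations}(2) to $P^{jN}(T)$ together with Lemma~\ref{lem:component-promotion-tableau}. Because $N \mid jN$, rotational symmetry yields a permutation $\sigma_j$ of $\{1, \ldots, r\}$ with $\rho^{jN}(\partial C_i) = \partial C_{\sigma_j(i)}$. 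Since $\phi$ is a bijection between rectangular SYT and their $m$-diagrams, the condition $P^{jN}(T) = T$ is equivalent to $\phi(P^{jN}(T)) = \phi(T)$, which by the inductive description holds precisely when $P^{jN}(T_{C_i}) = T_{C_{\sigma_j(i)}}$ for every $i$ --- exactly the implication in the definition of $\ell$. Taking $j$ minimal on both sides gives $k = \ell$, hence $|\cO(T)| = \ell N$.

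The main obstacle is justifying the inductive description used above: a priori, components could merge, fragment, or appear from nowhere under repeated promotion. The rigidity supplied by Corollary~\ref{cor:endpoints are rotations}(2) --- every component of $\phi(T)$ determines a \emph{single} component of $\phi(P(T))$ with a predetermined endpoint set --- together with the preservation of the number of components under $P$ (noted in the proof of Lemma~\ref{lem:rotational symmetry divides orbit}) prevents this pathology. Once this stability is locked in, the remainder of the argument is bookkeeping with the bijection $\phi$.
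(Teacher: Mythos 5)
Your proposal is correct and follows essentially the same route as the paper: both rest on Lemma~\ref{lem:rotational symmetry divides orbit} to get $N\mid|\cO(T)|$, on iterated applications of Corollary~\ref{cor:endpoints are rotations} (with Lemma~\ref{lem:component-promotion-tableau}) to identify the components of $\phi(P^{jN}(T))$ with the $\phi(P^{jN}(T_{C_i}))$, and on injectivity of $\phi$ to convert the matching of components into $P^{jN}(T)=T$. The only difference is organizational --- you package the two directions as a single equivalence and compare minima, whereas the paper proves $|\cO(T)|\mid\ell N$ and $|\cO(T)|\nmid kN$ for $k<\ell$ separately --- but the content is the same.
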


\begin{proof}
    By Lemma \ref{lem:rotational symmetry divides orbit} we have that if $\phi(T)$ has order $N$ rotational symmetry, then $N\big||\cO(T)|.$ We complete the proof by showing that $\ell$ as defined in step (4) is the smallest positive integer such that $|\cO(T)|\big| \ell N.$

    Note that for each component $C_i$ of $\phi(T)$ we have that $\rho^{\ell N}(\partial C_i) = \partial C_j$ for some $j$ by the definition of $N.$ Iterated applications Corollary \ref{cor:endpoints are rotations} yields that the component of $\phi(P^{\ell N}(T))$ with endpoints equal to $\partial C_j$ is the component $\phi(P^{\ell N}(T_{C_i})$. Define $C_j'$ to be the component of $\phi(P^{\ell N}(T))$ with endpoints equal to $\partial C_j.$ Since $\ell$ is chosen so that whenever $\rho^{\ell N}(\partial C_i)= \partial C_j$ we have that $P^{\ell N}(T_{C_i}) = T_{C_j},$ we see that the algorithm in Lemma \ref{lemma: creating m-diagram from tableau} constructs the arcs in $C_j'$ in the same order that it constructs the arcs in $C_j.$ It follows that $C_j' = C_j$ and so we have that $\phi(P^{\ell N}(T)) = \phi(T).$ It follows that $|\cO(T)|\big|\ell N.$

    Now, if $k< \ell,$ there is some component $C_i$ of $\phi(T)$ such that $\rho^{kN}(\partial C_i) = C_j$ and $P^{kN}(T_{C_i})\neq T_{C_j}.$ It follows that the component of $P^{kN}(T)$ with endpoints equal to $\partial C_j$ is not equal to $C_j$ and so $|\cO(T)|$does not divide $kN.$ This completes the proof.
\end{proof}

\begin{example}
As an example, both of the $m-$diagrams shown below have the same order of rotational symmetry because both have two components, each with $8$ boundary points.    
\begin{center}
    \begin{tikzpicture}
        \node at (0,0) {\loneexample};
        \node at (8.6,0) {\ltwoexample};
    \end{tikzpicture}
\end{center}
Thus $N=8$ in the notation of the previous Theorem.  The $m-$diagram on the left has $\ell=1$ while the $m-$diagram on the right has $\ell=2$.  Intuitively, this is because the two components in the $m-$diagram on the left ``look the same" but ``look different" in the $m-$diagram on the right.  Formally, we need to compute promotions of the tableaux corresponding to the four components.  Three of the components above have corresponding tableau that is uniformly equivalent to $T_C$ below, while the fourth is $T_{C'}$ below.
\[T_C = {\ytableausetup{boxsize=0.6cm}\begin{ytableau}
            1 & 2 \\
            3 & 4 \\
            5 & 6 \\
            7 & 8
        \end{ytableau}} \hspace{.5in} T_{C'} = {\ytableausetup{boxsize=0.6cm}\begin{ytableau}
            9 & 11 \\
            10 & 13 \\
            12 & 15 \\
            14 & 16
        \end{ytableau}}\]
As a $4 \times 2$ tableau, note that $T_C$ has promotion order $2$ so $\ell=1$ for the $m-$diagram on the left.  Also $P(T_C)$ is uniformly equivalent to $T_{C'}$.  So $P^8(T_{C'}) \neq T_C$ and $\ell = 2$ for the $m-$diagram on the right, as claimed.
\end{example}

\begin{remark}
    While Theorem \ref{algorithm} gives us a method to compute the orbit length of any non-minimal tableau, for many examples there may be shortcuts. For example, if the $m-$diagram $\phi(T)$ has order $|T|$ rotational symmetry, it is clear that $|\cO(T)| = |T|.$ As well, by the results of \cite{Haiman}, the orbit length $|\cO(T)|$ must divide $|T|.$ It follows that $N$ must divide $|T|,$ reducing the set of possible orders of rotational symmetry to check. Moreover, once $N$ has been found, in step (4) of Theorem \ref{algorithm} we need only check values of $\ell$ such that $\ell N \big||T|.$
\end{remark}

Following the algorithm in Theorem \ref{algorithm}, we see that the orbit length $|\cO(T)|$ is determined by examining orbits of a set of minimal tableaux. While there is no known method for identifying the orbit lengths of minimal tableaux, these tableaux are typically much smaller than $T$ and thus directly computing their orbit lengths is much less computationally expensive than computing the entire promotion orbit of $T.$

\begin{example} We can now calculate the promotion orbit length of our running example $T.$  The $m-$diagram $\phi(T)$ has four components, $C_1, C_2, C_3, C_4.$ 
\begin{center}
    \begin{tikzpicture}
        \node at (0,0) {\MTcomponents};
        \node at (0,-1) {$\phi(T)$};
    \end{tikzpicture}
\end{center}

The sets of endpoints for each component are the following:
\begin{align*}
    \partial C_1 &= \{1, 18, 19, 20, 21, 22, 23, 24\}\\
    \partial C_2 &= \{2, 3, 4, 5\} \\
    \partial C_3 &= \{6, 7, 8, 9, 10, 11, 12, 13\} \\
    \partial C_4 &= \{14, 15, 16, 17\}.
\end{align*}
Note that the following equalities hold:
\begin{align*}
    \rho^{12}(\partial C_1) = \partial C_3 \quad
    \rho^{12}(\partial C_2) = \partial C_4 \quad
    \rho^{12}(\partial C_3) = \partial C_1 \quad
    \rho^{12}(\partial C_4) = \partial C_2.
\end{align*}
This is the minimal value of $N$ such that $\sqcup_{i=1}^4\rho^{N}(\partial C_i) = \sqcup_{i=1}^4\partial C_i$ and so $\phi(T)$ has order 12 rotational symmetry. As the orbit length $|\cO(T)|$ must divide 24, either $|\cO(T)|=12$ or $|\cO(T)|=24.$ We now need to calculate the 12-fold promotions of each tableau $T_{C_i}$ corresponding to the components $C_i.$
\begin{center}
    \begin{tikzpicture}
        \node at (0,0) {\ytableausetup{boxsize=0.6cm}\begin{ytableau}
            1 & 19 \\
            18 & 21 \\
            20 & 23 \\
            22 & 24
        \end{ytableau}};
        \node at (0,-1.75) {$T_{C_1}$};
        \node at (2,0) {\begin{ytableau}
            2 \\
            3 \\
            4 \\
            5
        \end{ytableau}};
        \node at (2, -1.75) {$T_{C_2}$};
        \node at (4,0) {\ytableausetup{boxsize=0.6cm}\begin{ytableau}
            6 & 7 \\
            8 & 9 \\
            10 & 11 \\
            12 & 13
        \end{ytableau}};
        \node at (4,-1.75) {$T_{C_3}$};
        \node at (6,0) {\begin{ytableau}
            14 \\
            15 \\
            16 \\
            17
        \end{ytableau}};
        \node at (6, -1.75) {$T_{C_4}$};
    \end{tikzpicture}
\end{center}

Recall that whenever $1\notin \partial C_i,$ the promotion $P(T_{C_i})$ is achieved by simply decrementing each entry of $T_{C_i}$ by 1, which corresponds to simply translating the component $C_i$ once to the left in $\phi(T).$ So, we have that $P^{12}(T_{C_4}) = T_{C_2}.$ We now consider the promotions of $T_{C_1}, T_{C_2}, T_{C_3}.$ As each of these components has an endpoint within the first 12 vertices of $\phi(T),$ the structure of each component will change as the promotion operator is applied to $T.$ First, we see that the 12-fold promotion of $T_{C_1}$ is given by
\begin{center}
\begin{tikzpicture}
    \node at (0,0) {\ytableausetup{boxsize=0.6cm}\begin{ytableau}
            1 & 19 \\
            18 & 21 \\
            20 & 23 \\
            22 & 24
        \end{ytableau}};
        \node at (0,-1.75) {$T_{C_1}$};
    \draw[->](1,0) -- (2, 0);
    \node at (3,0) {\ytableausetup{boxsize=0.6cm}\begin{ytableau}
            17 & 18 \\
            19 & 20 \\
            21 & 22 \\
            23 & 24
        \end{ytableau}};
        \node at (3,-1.75) {$P(T_{C_1})$};
    \draw[->, dotted, thick](4,0) -- (5, 0);
    \node at (6,0) {\ytableausetup{boxsize=0.6cm}\begin{ytableau}
            6 & 7 \\
            8 & 9 \\
            10 & 11 \\
            12 & 13
        \end{ytableau}};
        \node at (6,-1.75) {$P^{12}(T_{C_1})$};
    \end{tikzpicture}
\end{center}
and so $\rho^{12}(\partial C_1)=\partial C_3$ and $P^{12}(T_{C_1}) = T_{C_3}.$ The 12-fold promotion of $T_{C_2}$ is found as follows:
\begin{center}
\begin{tikzpicture}
    \node at (0,0) {\ytableausetup{boxsize=0.6cm}\begin{ytableau}
            2 \\
            3 \\
            4 \\
            5
        \end{ytableau}};
        \node at (0,-1.75) {$T_{C_2}$};
    \draw[->](1,0) -- (2, 0);
    \node at (3,0) {\ytableausetup{boxsize=0.6cm}\begin{ytableau}
            1 \\
            2 \\
            3 \\
            4
        \end{ytableau}};
        \node at (3,-1.75) {$P(T_{C_2})$};
    \draw[->, dotted, thick](4,0) -- (5, 0);
    \node at (6,0) {\ytableausetup{boxsize=0.6cm}\begin{ytableau}
            21 \\
            22 \\
            23 \\
            24
        \end{ytableau}};
        \node at (6,-1.75) {$P^{5}(T_{C_2})$};
        \draw[->, dotted, thick](7,0) -- (8, 0);
    \node at (9,0) {\ytableausetup{boxsize=0.6cm}\begin{ytableau}
            14 \\
            15 \\
            16 \\
            17
        \end{ytableau}};
        \node at (9,-1.75) {$P^{12}(T_{C_2})$};
    \end{tikzpicture}
\end{center}
So we have that $\rho^{12}(\partial C_2)=\partial C_4$ and $P^{12}(T_{C_2})=T_{C_4}.$ Note that we calculate $P^5(T_{C_2})$ from $P(T_{C_2})$ using the arguments in Remark \ref{rem:k-fold-component}. Finally, the 12-fold promotion of $T_{C_3}$ is  
\begin{center}
\begin{tikzpicture}
    \node at (0,0) {\ytableausetup{boxsize=0.6cm}\begin{ytableau}
            6 & 7 \\
            8 & 9 \\
            10 & 11 \\
            12 & 13
        \end{ytableau}};
        \node at (0,-1.75) {$T_{C_3}$};
    \draw[->, dotted, thick](1,0) -- (2, 0);
    \node at (3,0) {\ytableausetup{boxsize=0.6cm}\begin{ytableau}
            1 & 2 \\
            3 & 4 \\
            5 & 6 \\
            7 & 8
        \end{ytableau}};
        \node at (3,-1.75) {$P^5(T_{C_3})$};
    \draw[->, dotted, thick](4,0) -- (5, 0);
    \node at (6,0) {\ytableausetup{boxsize=0.6cm}\begin{ytableau}
            1 & 19 \\
            18 & 21 \\
            20 & 23 \\
            22 & 24
        \end{ytableau}};
        \node at (6,-1.75) {$P^{12}(T_{C_3})$};
    \end{tikzpicture}
\end{center}
and so $\rho^{12}(\partial C_3)=\partial C_1$ and $P^{12}(T_{C_3})=T_{C_1}.$ Thus, the orbit length $|\cO(T)|$ is 12.
\end{example}

\section{Application: Promotion of Rectangular Semi-Standard Tableaux}\label{sec:ssyt}

In this section, we extend the results of Section \ref{section: main} to the case of semi-standard Young tableaux.

We begin with an overview of (column) semi-standard Young tableaux, paying particular attention to the promotion operator. A \emph{(column) semistandard Young tableau (SSYT)} is a filling of the Young diagram for $\lambda$ with the numbers $1, 2, \ldots, n$ so that each number appears at least once, rows strictly increase left-to-right, and columns weakly increase top-to-bottom. If each number $i$ appears $e_i$ times in $T$ then we denote the content $\{1^{e_1}, 2^{e_2}, \ldots, n^{e_n}\}$. Generically, the content of an SSYT is a multiset. 

\begin{remark}
    Usually \emph{semistandard} refers to tableaux in which columns strictly increase and rows are weakly increasing.  For this reason, we stress \emph{column} in our terminology.  Note that each column semistandard tableau is the transpose of a (row) semistandard tableau.
\end{remark}

Given an SSYT of shape $\lambda,$ there is a natural map $\psi$ that constructs an SYT of the same shape. Suppose $T$ has content $\{1^{e_1}, \ldots, n^{e_n}\}.$ The SYT $\psi(T)$ is constructed as follows.
\begin{enumerate}
    \item Relabel each 1 in $T$ sequentially from top to bottom using the set $\{1, 2, \ldots, e_1\}.$
    \item For each $i>1,$ relabel each entry $i$ sequentially from top to bottom using the set $\{(e_1+\ldots+e_{i-1})+1, (e_1+\ldots +e_{i-1})+2, \ldots, (e_1+\ldots+e_{i-1})+e_i\}.$
\end{enumerate}

We give an example of an SYT constructed with $\psi$ in Figure \ref{fig:psi-example}. Later, we will see that the map $\psi$ commutes with the promotion operator in a natural way. First, we prove that $\psi(T)$ is indeed an SYT of the same shape as $T.$

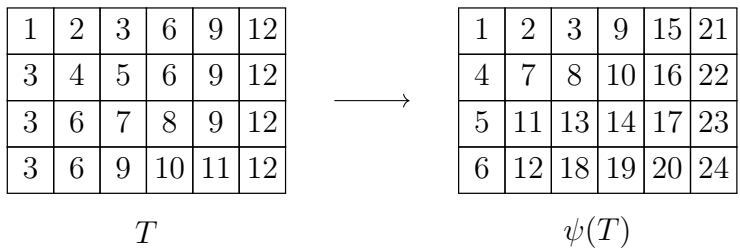
\begin{figure}[h]
    \centering
    \begin{tikzpicture}
        \node at (0,0) {\ytableausetup{boxsize=0.6cm}\begin{ytableau}
            1 & 2 & 3 & 6 & 9 & 12 \\
            3 & 4 & 5 & 6 & 9 & 12 \\
            3 & 6 & 7 & 8 & 9 & 12 \\
            3 & 6 & 9 & 10 & 11 & 12
        \end{ytableau}};
        \node at (0,-1.75) {$T$};
    \draw[->](2.5,0) -- (3.5, 0);
    \node at (6,0) {\ytableausetup{boxsize=0.6cm}\begin{ytableau}
            1 & 2 & 3 & 9 & 15 & 21 \\
            4 & 7 & 8 & 10 & 16 & 22 \\
            5 & 11 & 13 & 14 & 17 & 23 \\
            6 & 12 & 18 & 19 & 20 & 24
        \end{ytableau}};
        \node at (6, -1.75) {$\psi(T)$};
    \end{tikzpicture}
    \caption{The column SSYT $T$ and its corresponding SYT $\psi(T)$}
    \label{fig:psi-example}
\end{figure}

\begin{lemma} \label{lemma: map psi to SYT from SSYT}
    Suppose $T$ is a column SSYT with content $\{1^{e_1}, 2^{e_2}, \ldots, n^{e_n}\}$.  Then $\psi(T)$ is an SYT of the same shape with content $\{1, 2, \ldots, (e_1+\ldots+e_n)\}.$
    \end{lemma}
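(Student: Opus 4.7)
The plan is to verify the three requirements of an SYT — shape, content, and strict row/column increase — using the bookkeeping built into the definition of $\psi$. Shape preservation is immediate since $\psi$ only changes labels. For content, observe that the relabeling partitions the boxes according to the value they held in $T$, assigning value $i$ exactly the consecutive integers $E_{i-1}+1, E_{i-1}+2, \ldots, E_i$ where $E_j := e_1 + \cdots + e_j$ and $E_0 := 0$. The disjoint union of these intervals is $\{1, 2, \ldots, E_n\}$, so the content is correct.

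The heart of the argument is checking that rows strictly increase and columns strictly increase. For the row condition, I would consider adjacent entries $T_{i,j} = a$ and $T_{i,j+1} = b$, where $a < b$ because $T$ is column semistandard with strictly increasing rows. The new label of $T_{i,j}$ lies in $(E_{a-1}, E_a]$ and the new label of $T_{i,j+1}$ lies in $(E_{b-1}, E_b]$. Since $a \leq b - 1$ and all $e_i \geq 1$, we get $E_a \leq E_{b-1}$, so the label of $T_{i,j}$ is at most $E_a \leq E_{b-1} < $ (label of $T_{i,j+1}$). This gives strict row increase.

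For the column condition, the analysis splits into two cases based on the weakly-increasing-columns condition of $T$. If $T_{i,j} = a < b = T_{i+1,j}$, the same interval comparison used for rows shows the new labels strictly increase down the column. If instead $T_{i,j} = T_{i+1,j} = k$, then the two boxes both receive labels from the interval $(E_{k-1}, E_k]$; the key point here is that the relabeling procedure assigns labels to occurrences of $k$ sequentially \emph{from top to bottom}, so the upper box receives the strictly smaller label. The fact that this ordering is well-defined uses that rows of $T$ strictly increase, so all occurrences of any fixed value $k$ lie in distinct rows and can be linearly ordered by row index.

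The only conceptual point requiring care is the second case for columns, where one must be sure that the ``top-to-bottom'' labeling rule is unambiguous and actually produces the inequality we want; the remaining verifications are essentially accounting with the intervals $(E_{i-1}, E_i]$. I do not anticipate any real obstacle — the lemma is a direct consequence of unpacking the definition of $\psi$ and using the two inequality types of column SSYT in parallel.
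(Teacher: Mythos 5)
Your proof is correct and follows essentially the same route as the paper's: both arguments reduce to the observations that all copies of a fixed value occupy distinct rows (so the top-to-bottom relabeling is unambiguous and handles equal adjacent column entries), and that the label intervals for distinct values are disjoint and ordered (handling all strict inequalities). Your version simply makes the interval bookkeeping with $E_i = e_1 + \cdots + e_i$ explicit.
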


    \begin{proof}
        As the map $\psi$ simply relabels the entries of $T,$ it is clear that $T$ and $\psi(T)$ are the same shape.

        Since the entries of $T$ strictly increase left-to-right, no number can be repeated twice in the same row. Thus, all copies of each number $i$ in $T$ appear in a skew strip of width at most one.  This means the sequential renumbering for each $i$ is unambiguous. If two copies of $i$ appear in the same column of $T$ then the corresponding boxes in $\psi(T)$ increase strictly from top to bottom.  For all other $i<j$ in $T$, the numbers in the corresponding boxes of $\psi(T)$ are still increasing by construction.  So $\psi(T)$ is both row-strict and column-strict and is thus an SYT.
    \end{proof}

A simple consequence of the definition of $\psi$ is that it is injective on the set of all SSYT with the same shape and content. That is, if $T$ and $S$ are both of shape $\lambda$ and have content $\{1^{e_1}, \ldots, r^{e_r}\},$ then $\psi(T)=\psi(S)$ if and only if $T=S.$

\begin{lemma}\label{lem:psi-injective}
    Suppose $S$ and $T$ are distinct column SSYT of the same shape and have the same content $\{1^{e_1}, \ldots, r^{e_r}\}.$ Then $\psi(T)\neq \psi(S).$
\end{lemma}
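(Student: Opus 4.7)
The plan is to leverage the fact that $\psi$ partitions the output values $\{1,2,\ldots,e_1+\cdots+e_r\}$ into consecutive blocks, one block per input value, so that the block containing $\psi(T)_{ij}$ already determines $T_{ij}$. Concretely, for each $i \in \{1,\ldots,r\}$, set $E_i = [e_1+\cdots+e_{i-1}+1,\ e_1+\cdots+e_i]$. By construction of $\psi$, the relabeled entry $\psi(T)_{ij}$ lies in $E_{T_{ij}}$, and these blocks $E_1,\ldots,E_r$ are pairwise disjoint.

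First I would note that since $S \neq T$ but they have the same shape, there must be at least one box $(i,j)$ with $S_{ij} \neq T_{ij}$. Let $a = S_{ij}$ and $b = T_{ij}$, so $a \neq b$. Then $\psi(S)_{ij} \in E_a$ and $\psi(T)_{ij} \in E_b$, and since $E_a \cap E_b = \emptyset$, we immediately conclude $\psi(S)_{ij} \neq \psi(T)_{ij}$. Hence $\psi(S) \neq \psi(T)$, as required.

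There is no real obstacle here; the only care needed is to verify that the recipe defining $\psi$ (relabeling copies of value $i$ sequentially top-to-bottom using the block $E_i$) really does guarantee $\psi(T)_{ij} \in E_{T_{ij}}$. This is immediate from step (2) in the definition of $\psi$, which assigns labels from $E_i$ precisely to the boxes of $T$ containing $i$. So the bulk of the proof is simply naming the right invariant (the block containing a relabeled entry) and observing that this invariant recovers the original value $T_{ij}$.
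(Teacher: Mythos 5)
Your proposal is correct and is essentially identical to the paper's proof: both identify the disjoint consecutive blocks $E_i$ of relabeled values assigned to the boxes containing $i$, locate a box where $S$ and $T$ differ, and conclude that the corresponding entries of $\psi(S)$ and $\psi(T)$ lie in disjoint blocks. No further comparison is needed.
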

\begin{proof}
    Since the contents of $S$ and $T$ are identical, all boxes of $S$ and $T$ with filled with $i$ are relabelled with some integer in the set $\{(e_{1}+\ldots+e_{i-1})+1, \ldots, (e_{1}+\ldots+ e_{i-1})+e_i\}$ by definition of the map $\psi.$ As $S$ and $T$ are distinct column SSYT, there is some pair of integers $j, k$ such that $T_{jk} = i$ and $S_{jk} = i'$ with $i\neq i'.$ Then 
    \[\psi(T)_{jk}\in \{(e_{1}+\ldots+e_{i-1})+1, \ldots, (e_{1}+\ldots+ e_{i-1})+e_i\}\] 
    and 
    \[\psi(S)_{jk}\in \{(e_{1}+\ldots+e_{i'-1})+1, \ldots, (e_{1}+\ldots+ e_{i'-1})+e_i\}.\] 
    These sets are disjoint and so $\psi(T)\neq \psi(S).$
\end{proof}

We now shift our focus towards the promotion of SSYT. Note that the promotion operator from Section \ref{sec: tableaux} generalizes to SSYT by performing the standard promotion once for every 1 that appears in the tableau. 

\begin{definition}[Promotion of SSYT]
    Given a column SSYT $T$ with content $\{1^{e_1}, \ldots, n^{e_n}\}$, the \emph{promotion} of $T$ is the column SSYT $P(T)$ with content $\{1^{e_2}, \ldots, (n-1)^{e_n}, n^{e_1}\}$ created as follows:
    \begin{enumerate}
    \item Erase 1 in the top left corner of $T$ and leave an empty box.
    \item Given the configuration $\ytableausetup {boxsize=0.5 cm} \begin{ytableau} {} & b \\ a \\ \end{ytableau}$ and $b \leq a$ then slide $b$ left; else if $a < b$ slide $a$ up.
    \item Repeat the above process until there are no nonempty boxes below or to the right of the empty box.
    \item Repeat steps (1) through (3) until all $e_1$ copies of $1$ have been removed from the tableau.
    \item Decrement all entries by $1$ and insert $e_1$ copies of $n$ in the empty boxes.
    \end{enumerate}
\end{definition}

Since the promotion of SSYT is in essence a repeated application of promotion for SYT, it is natural to expect that the map $\psi$ from SSYT to SYT and the promotion operator $P$ commute in some natural way. The next result shows that this is true and characterises how the two maps interact.

\begin{lemma} \label{lemma: promoting k-commutes with the map SSYT to SYT}
    Suppose $T$ is a column SSYT with content $\{1^{e_1}, \ldots, r^{e_r}\}$. Then the promotion map commutes with the map $\psi$ to standard Young tableaux in the following sense:
    \[\psi(P(T)) = P^{e_1}(\psi(T)).\]
\end{lemma}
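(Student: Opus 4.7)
My plan is to prove the identity by induction on $e_1$, exploiting that the jeu de taquin slide rules on column SSYT and on SYT are compatible under the standardization map $\psi$. Because rows of a column SSYT are strictly increasing and columns weakly increasing, the $e_1$ copies of $1$ in $T$ are forced to occupy precisely the positions $(1,1), (2,1), \ldots, (e_1, 1)$ of the first column, and $\psi$ assigns these cells the labels $1, 2, \ldots, e_1$ from top to bottom.

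The key compatibility observation is that the SSYT and SYT slide rules agree after passing through $\psi$. In an SSYT the right neighbour $b$ slides left when $b \le a$ (with the weak inequality breaking ties toward the right), while in an SYT the analogous rule is strict. If the compared entries at $(i, j{+}1)$ and $(i{+}1, j)$ carry different $T$-values, $\psi$ preserves their ordering and both rules prescribe the same move. If the $T$-values coincide, $\psi$ gives the strictly smaller label to the cell at $(i, j{+}1)$ (the higher-row cell), so $b' < a'$ in $\psi(T)$ and the SYT rule still slides $b'$ left, matching the SSYT tie-breaker.

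The base case $e_1 = 1$ now follows: there is a single SSYT slide and a single SYT iteration, rule compatibility makes them trace the same path, and the parallel decrement-and-fill steps (placing $n$ in $P(T)$ and $N = |T|$ in $P(\psi(T))$ at the common terminal corner) produce tableaux that match under $\psi$. For the inductive step with $e_1 \ge 2$, I would first observe that each SSYT slide pushes the remaining copies of $1$ one cell upward in column $1$, so the top-left cell still contains a $1$ at the start of the next slide; symmetrically, after one iteration of $P$ on $\psi(T)$ the global decrement sends the old label $k{+}1$ to $k$, placing entry $1$ back at $(1,1)$ so that the $(k{+}1)$-th original $1$ of $T$ is about to be processed. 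Rule compatibility then keeps the two processes synchronized through all $e_1$ slides.

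The main technical obstacle is reconciling the trailing empty boxes left by the SSYT process with the max-value insertions placed at each SYT iteration. In $P(T)$ the terminal corners remain empty until the single final filling step, whereas in $P^k(\psi(T))$ a copy of $N$ is inserted at the end of each iteration and can be shuffled and decremented by later slides — in particular a later SYT slide may push such an insert upward even as the corresponding SSYT slide terminates against an empty neighbour. My plan to handle this is to carry the invariant that, after $k$ steps, $P^k(\psi(T))$ equals the $\psi$-standardization of the partial SSYT obtained by provisionally filling its $k$ terminal empty boxes with $n$'s and recording the accumulated decrement. Because the current maximum always wins its slide comparisons, the trajectories of the $N$-inserts through subsequent iterations record exactly the top-to-bottom ordering that $\psi$ will assign to the $n$'s in $P(T)$ at the end. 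Verifying that this invariant is preserved by each slide — most delicately when an SYT slide path meets an $N$-insert — is the technical core of the induction.
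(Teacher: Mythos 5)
Your proposal follows the same route as the paper's proof: induction on $e_1$, with the whole argument resting on the observation that $\psi$ converts the weak tie $a=b$ into a strict inequality that makes the SYT slide rule reproduce the SSYT tie-break; your treatment of that comparison and of the base case $e_1=1$ is essentially identical to the paper's. Where you diverge is in the inductive step: the paper applies the induction hypothesis to the tableau $T'$ obtained after the first slide and treats the corner filled with $N=|T|$ at the end of each SYT iteration as if it were inert, whereas you correctly flag that a later SYT slide path can terminate against a previously vacated corner and drag an inserted $N$ upward, even though the corresponding SSYT slide stops against the empty box. Your proposed invariant --- that $P^k(\psi(T))$ is the standardization of the partial SSYT with its $k$ empty corners provisionally filled by $n$'s, so that the top-to-bottom relabeling of the $n$'s under $\psi$ is exactly what the migrating $N$-inserts compute --- is the right mechanism for closing this, and it is precisely the point the paper's own argument leaves implicit. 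That said, you explicitly defer the verification of this invariant, so as written your inductive step is a detailed and well-aimed plan rather than a completed proof; carrying out that one check is what remains.
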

\begin{proof}
    We prove this by induction on $e_1$.  When $e_1=1$ we compare the path of one empty box in the SSYT $T$ and SYT $\psi(T)$ under repeated slides of the form of Step (2) in the promotion algorithm.  The map $\psi$ is defined so that if $a < b$ then the entry of every box in $\psi(T)$ that was labeled $a$ in $T$ is less than the entry of every box in $\psi(T)$ that was labeled $b$ in $T$.  This means that the empty box in Step (2) slides the same direction when promoting $\psi(T)$ as $T$ except perhaps if $a=b$.  In that case, we have the following configurations in $T$ (on the left) and $\psi(T)$ (on the right):
    \[\ytableausetup {boxsize=0.75 cm} \begin{ytableau} {} & a \\ a \\ \end{ytableau} \hspace{1in} {\ytableausetup {boxsize=0.75 cm} \begin{ytableau} {} & {a'} \\ \scalebox{0.7}{$\begin{array}{c} a'\!+\!1 \end{array}$} \\ \end{ytableau}}\]
    For both tableaux, promotion's Step (2) slides the empty box to the right.  Repeating, we conclude that the empty box stays in the same position after each iteration of Step (2).  Thus, two things are true when we first encounter Step (4) in the promotion algorithm: first, the empty box slid along the same path and ended in the same position in $T$ as in $\psi(T)$; and second, when $k=1$ we know $P(\psi(T))=\psi(P(T))$.
    
    Now suppose $k \geq 2$ and take as inductive hypothesis that for any tableau $T'$ with $e_1=k-1$ instances of one, $P^{e_1}(\psi(T'))=P^{k-1}(\psi(T')) = \psi(P(T'))$.  Now let $T$ be a column SSYT for which one occurs $e_1=k$ times in $T$.  Let $T'$ be the column SSYT obtained when we first encounter Step (4) in promoting $T$.  By definition, we know $T'$ is a column SSYT with $k-1$ instances of one and with $|T|-1$ boxes.  Moreover, by the previous paragraph, the tableau obtained at Step (4) of promoting $\psi(T)$ has the same shape as $T'$ and all boxes were slid in exactly the same way.  In other words, the tableau at Step (4) of promoting $\psi(T)$ differs from $\psi(T')$ only by subtracting one from each entry.  By induction, we know that $\psi(P(T'))=P^{k-1}(\psi(T'))$.  By the previous paragraph, we know that $\psi(P(T'))$ differs from $\psi(P(T))$ only by inserting a box with the largest entry of $T$ into the same position as the box with entry $|T|$ in $\psi(P(T))$.  This completes the proof.
\end{proof}

Given a multiset $\{1^{e_1}, \ldots, r^{e_r}\}$, let $R$ be the smallest integer such that $e_{i+R \mod(r)}=e_i$ for all $1\leq i\leq r$. We now provide a formula to compute $|\cO(T)|$ using $|\cO(\psi(T))|$ and $R$.  

\begin{theorem}\label{thm:ssyt-alg}
Let $T$ be a column SSYT with content $\{1^{e_1}, \ldots, r^{e_r}\}$, and let $R$ be as above. Then $|\cO(T)|=\ell R$ where $$\ell\sum_{i=1}^R e_i=\textup{lcm} \left(\sum_{i=1}^R e_i,|\cO(\psi(T))|\right).$$   
\end{theorem}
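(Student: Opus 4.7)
\bigskip

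\noindent\textbf{Proof proposal.}
The plan is to reduce the problem to a direct application of Lemma~\ref{lemma: promoting k-commutes with the map SSYT to SYT} combined with the injectivity statement in Lemma~\ref{lem:psi-injective}. The orbit length $|\mathcal{O}(T)|$ is, by definition, the smallest positive integer $k$ with $P^k(T)=T$, so I will characterize the set of such $k$ and then minimize.

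First, I would iterate Lemma~\ref{lemma: promoting k-commutes with the map SSYT to SYT} by induction on $k$ to establish the identity
\[
\psi(P^k(T)) \;=\; P^{\,e_1+e_2+\cdots+e_k}(\psi(T)),
\]
where the indices in the exponent are taken in the natural sense: the $j$-th promotion of $T$ consumes $e_j$ (read modulo the rotated content) copies of the current smallest entry. The base case $k=1$ is exactly the cited lemma, and the inductive step follows because $P(T)$ has $e_2$ ones, $P^2(T)$ has $e_3$ ones, and so on, so each additional promotion on the SSYT side pulls out exactly one more block of consecutive $P$'s on the SYT side.

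Next, I would separate the condition $P^k(T)=T$ into two pieces. The content of $P^k(T)$ equals the content of $T$ exactly when the cyclic shift of $(e_1,\ldots,e_r)$ by $k$ recovers itself, and by the definition of $R$ this is equivalent to $R\mid k$. Once we know the contents agree, $P^k(T)$ and $T$ are column SSYT of the same shape and content, so Lemma~\ref{lem:psi-injective} tells us $P^k(T)=T$ if and only if $\psi(P^k(T))=\psi(T)$. By the identity in the previous paragraph this is the condition $P^{\,e_1+\cdots+e_k}(\psi(T))=\psi(T)$, i.e.\ $|\mathcal{O}(\psi(T))|\mid e_1+\cdots+e_k$.

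Finally, I would combine these. Writing $k=mR$ for $m\in\mathbb{Z}_{>0}$, periodicity of the content gives $\sum_{i=1}^{mR}e_i = m\sum_{i=1}^{R}e_i$. Hence $P^k(T)=T$ holds precisely when $k=mR$ with
\[
|\mathcal{O}(\psi(T))| \;\Big|\; m\sum_{i=1}^{R}e_i.
\]
The minimal positive $m$ satisfying this is $m=\operatorname{lcm}\!\bigl(\sum_{i=1}^{R}e_i,\,|\mathcal{O}(\psi(T))|\bigr)\big/\sum_{i=1}^{R}e_i$, which is exactly the $\ell$ in the statement. Therefore $|\mathcal{O}(T)|=\ell R$, as required. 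The only step requiring real care is the induction for the iterated commutation identity, since one must track how the content of the SSYT rotates under each promotion and verify that the exponent on the right-hand side grows by the \emph{current} number of $1$'s (which is $e_{k+1}$ after $k$ promotions), but this is a straightforward bookkeeping argument given Lemma~\ref{lemma: promoting k-commutes with the map SSYT to SYT}.
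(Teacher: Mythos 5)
Your proposal is correct and follows essentially the same route as the paper: both arguments iterate Lemma~\ref{lemma: promoting k-commutes with the map SSYT to SYT} to get $\psi(P^{k}(T))=P^{t}(\psi(T))$ with $t$ collapsing to $m\sum_{i=1}^{R}e_i$ by periodicity, use the content argument to force $R\mid k$, and invoke Lemma~\ref{lem:psi-injective} to convert $\psi(P^{k}(T))=\psi(T)$ back into $P^{k}(T)=T$. The only difference is presentational — you characterize the full set of return times and minimize directly, whereas the paper fixes $|\cO(T)|=\ell R$ and rules out smaller $\ell'$ by contradiction — so the content is the same.
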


\begin{proof}
    First observe that because $e_{i+|\cO(T)| \mod(r)}=e_i$ for all $1\leq i\leq r$, it follows by an argument analogous to that of Lemma \ref{lem:rotational symmetry divides orbit} that $R||\cO(T)|$. Say $|\cO(T)|=\ell R$. 
    
    Lemma \ref{lemma: promoting k-commutes with the map SSYT to SYT} says $\psi(P(T))=P^{e_1}(\psi(T)).$ Extending this result gives $\psi(P^{\ell R}(T))=P^{t}(\psi(T))$ where $t=\sum_{i=1}^{\ell R}e_{i \hspace{-.05in}\mod(r)}.$
Observe that we can rewrite $t$ as follows:
\begin{align*}
    t&=\sum_{i=1}^{\ell R}e_{i \hspace{-.05in}\mod(r)}\\
    &=\sum_{i=1}^{R}e_{i \hspace{-.05in}\mod(r)}+\sum_{i=R+1}^{2R}e_{i \hspace{-.05in}\mod(r)}+\ldots + \sum_{i=R(\ell-1)+1}^{\ell R}e_{i \hspace{-.05in}\mod(r)}\\
    &=\ell\sum_{i=1}^{R}e_{i}
\end{align*}
  Since $\psi(P^{\ell R}(T))=\psi(T)=P^t(\psi(T))$, we see that $|\cO(\psi(T))|$ divides $t$. In other words, $t=\ell\sum_{i=1}^{R}e_{i}$ is a multiple of $|\cO(\psi(T))|$. 
  
  For the sake of contradiction, say there is some $0<\ell' <\ell$ such that $\ell' \sum_{i=1}^R e_i$ is a multiple of   $|\cO(\psi(T))|$. Since $\ell' R<\ell R=|\cO(T)|$, this means $P^{\ell ' R}(T)\neq T$ but $P^{t'}(\psi(T))=\psi(T)$ where $t'=\ell'\sum_{i=1}^Re_i$. However, we know by Lemma \ref{lemma: promoting k-commutes with the map SSYT to SYT}, $\psi(P^{\ell ' R}(T))=P^{t'}(\psi(T))$. This means we have two distinct SSYT,  $T$ and $P^{\ell ' R}(T)$, of identical shape and content mapping to the same SYT under $\psi$. By Lemma \ref{lem:psi-injective}, this is not possible and so we conclude that $$\ell\sum_{i=1}^R e_i=\textup{lcm} \left(\sum_{i=1}^R e_i,|\cO(\psi(T))|\right).$$
\end{proof}

\begin{remark}
    The formula in Theorem \ref{thm:ssyt-alg} does not require the assumption that the SSYT $T$ is rectangular. However, if $T$ is not rectangular, then $\psi(T)$ is also not rectangular and thus Theorem \ref{algorithm} cannot be applied to determine the orbit length $|\cO(\psi(T))|.$
\end{remark}

\begin{example}
    We now calculate the orbit length $|\cO(T)|$ for the tableau $T$ in Figure \ref{fig:psi-example}.

    The content of $T$ is $\{1, 2, 3^4, 4, 5, 6^4, 7, 8, 9^4, 10, 11, 12^4\}.$ Note that $e_{i+4 \mod 12} = e_{i}$ for $i=1, \ldots, 12$ and $4$ is the smallest positive integer $R$ such that $e_{i+R\mod 12} = e_{i}$ for all $i.$ Thus, the orbit length $|\cO(T)|$ is a multiple of 4.

    We now need to calculate the orbit length $|\cO(\psi(T))|.$ Since $\psi(T)$ is a rectangular SYT, we use the results of Theorem \ref{algorithm}. First, we construct the $m-$diagram $\phi(\psi(T)).$
\vspace{1em}
\begin{center}
        \begin{tikzpicture}[scale=0.45]
\draw[style = dashed, <->] (0,0)--(25,0);
\node at (1,-0.5) {\tiny 1};
\node at (2,-0.5) {{\tiny 2}};
\node at (3,-0.5) {{\tiny3}};
\node at (4,-0.5) {{\tiny4}};
\node at (5,-0.5) {{\tiny5}};
\node at (6,-0.5) {\tiny6};
\node at (7,-0.5) {\tiny7};
\node at (8,-0.5) {{\tiny8}};
\node at (9,-0.5) {{\tiny9}};
\node at (10,-0.5) {{\tiny10}};
\node at (11,-0.5) {{\tiny11}};
\node at (12,-0.5) {\tiny12};
\node at (13,-0.5) {\tiny13};
\node at (14,-0.5) {{\tiny14}};
\node at (15,-0.5) {{\tiny15}};
\node at (16,-0.5) {{\tiny16}};
\node at (17,-0.5) {{\tiny17}};
\node at (18,-0.5) {\tiny18};
\node at (19,-0.5) {\tiny19};
\node at (20,-0.5) {{\tiny20}};
\node at (21,-0.5) {{\tiny21}};
\node at (22,-0.5) {{\tiny22}};
\node at (23,-0.5) {{\tiny23}};
\node at (24,-0.5) {{\tiny24}};

\draw[radius=.08, fill=black](1,0)circle;
\draw[radius=.08, fill=black](2,0)circle;
\draw[radius=.08, fill=black](3,0)circle;
\draw[radius=.08, fill=black](4,0)circle;
\draw[radius=.08, fill=black](5,0)circle;
\draw[radius=.08, fill=black](6,0)circle;
\draw[radius=.08, fill=black](7,0)circle;
\draw[radius=.08, fill=black](8,0)circle;
\draw[radius=.08, fill=black](9,0)circle;
\draw[radius=.08, fill=black](10,0)circle;
\draw[radius=.08, fill=black](11,0)circle;
\draw[radius=.08, fill=black](12,0)circle;
\draw[radius=.08, fill=black](13,0)circle;
\draw[radius=.08, fill=black](14,0)circle;
\draw[radius=.08, fill=black](15,0)circle;
\draw[radius=.08, fill=black](16,0)circle;
\draw[radius=.08, fill=black](17,0)circle;
\draw[radius=.08, fill=black](18,0)circle;
\draw[radius=.08, fill=black](19,0)circle;
\draw[radius=.08, fill=black](20,0)circle;
\draw[radius=.08, fill=black](21,0)circle;
\draw[radius=.08, fill=black](22,0)circle;
\draw[radius=.08, fill=black](23,0)circle;
\draw[radius=.08, fill=black](24,0)circle;

\draw[style=thick, color=black] (4,0) arc (0:180: .5cm);
\draw[style=thick, color=black] (5,0) arc (0:180: .5cm);
\draw[style=thick, color=black] (6,0) arc (0:180: .5cm);

\draw[style=thick, color=black] (10,0) arc (0:180: .5cm);
\draw[style=thick, color=black] (11,0) arc (0:180: .5cm);
\draw[style=thick, color=black] (12,0) arc (0:180: .5cm);

\draw[style=thick, color=black] (16,0) arc (0:180: .5cm);
\draw[style=thick, color=black] (17,0) arc (0:180: .5cm);
\draw[style=thick, color=black] (18,0) arc (0:180: .5cm);

\draw[style=thick, color=black] (24,0) arc (0:180: .5cm);
\draw[style=thick, color=black] (23,0) arc (0:180: .5cm);
\draw[style=thick, color=black] (22,0) arc (0:180: .5cm);

\draw[style=thick] (7,0) arc [start angle=0,end angle=180,x radius=2.5cm, y radius=1.5cm];
\draw[style=thick] (13,0) arc [start angle=0,end angle=180,x radius=2.5cm, y radius=1.5cm];
\draw[style=thick] (19,0) arc [start angle=0,end angle=180,x radius=2.5cm, y radius=1.5cm];

\draw[style=thick] (8,0) arc [start angle=0,end angle=180,x radius=3.5cm, y radius=2.5cm];
\draw[style=thick] (14,0) arc [start angle=0,end angle=180,x radius=3.5cm, y radius=2.5cm];
\draw[style=thick] (20,0) arc [start angle=0,end angle=180,x radius=3.5cm, y radius=2.5cm];

\node at (12, -1.5) {$\phi(\psi(T))$};
\end{tikzpicture}
\end{center}

The $m-$diagram $\phi(\psi(T))$ has 5 components $C_1, C_2, C_3, C_4, C_5$ with sets of endpoints
\begin{align*}
    \partial C_1 &= \{1, 2, 7, 8, 13, 14, 19, 20\} \\
    \partial C_2 &= \{3, 4, 5, 6\} \\
    \partial C_3 &= \{9, 10, 11, 12\} \\
    \partial C_4 &= \{15, 16, 17, 18\} \\
    \partial C_5 &= \{21, 22, 23, 24\}. 
\end{align*}
The minimal value of $N$ such that $\sqcup_{i=1}^5\rho^N(\partial C_i) = \sqcup_{i=1}^5\partial C_i$ is $N=6$ and so $\phi(\psi(T))$ has order 6 rotational symmetry. Next, since the orbit length $|\cO(\psi(T))|$ must divide 24, we conclude $|\cO(\psi(T))| = 6, 12,$ or $24.$ We now evaluate the 6-fold promotions of each tableau $T_{C_i}$ corresponding to the components $C_i.$

\begin{center}
    \begin{tikzpicture}
        \node at (0,0) {\ytableausetup{boxsize=0.6cm}\begin{ytableau}
            1 & 2 \\
            7 & 8 \\
            13 & 14 \\
            19 & 20
        \end{ytableau}};
        \node at (0,-1.75) {$T_{C_1}$};
        \node at (2,0) {\begin{ytableau}
            3 \\
            4 \\
            5 \\
            6
        \end{ytableau}};
        \node at (2, -1.75) {$T_{C_2}$};
        \node at (4,0) {\ytableausetup{boxsize=0.6cm}\begin{ytableau}
            9 \\
            10 \\
            11 \\
            12
        \end{ytableau}};
        \node at (4,-1.75) {$T_{C_3}$};
        \node at (6,0) {\begin{ytableau}
            15 \\
            16 \\
            17 \\
            18 \\
        \end{ytableau}};
        \node at (6, -1.75) {$T_{C_4}$};
         \node at (8,0) {\begin{ytableau}
            21 \\
            22 \\
            23 \\
            24 \\
        \end{ytableau}};
        \node at (8, -1.75) {$T_{C_5}$};
    \end{tikzpicture}
\end{center}
Since all entries of $T_{C_3}, T_{C_4}$ and $T_{C_5}$ are greater than 6, we calculate the 6-fold promotion of these tableaux by subtracting 6 from each entry:
\begin{center}
    \begin{tikzpicture}
        \node at (0,0) {\begin{ytableau}
            3 \\
            4 \\
            5 \\
            6
        \end{ytableau}};
        \node at (0, -1.75) {$P^6(T_{C_3})$};
        \node at (2,0) {\ytableausetup{boxsize=0.6cm}\begin{ytableau}
            9 \\
            10 \\
            11 \\
            12
        \end{ytableau}};
        \node at (2,-1.75) {$P^6(T_{C_4})$};
        \node at (4,0) {\begin{ytableau}
            15 \\
            16 \\
            17 \\
            18 \\
        \end{ytableau}};
        \node at (4,-1.75) {$P^6(T_{C_5}).$};
    \end{tikzpicture}
\end{center}
So the equalities $P^6(T_{C_3}) = T_{C_2}, P^6(T_{C_4}) = T_{C_3},$ and $P^6(T_{C_5}) = T_{C_4}$ hold. We now calculate $P^6(T_{C_1})$ and $P^6(T_{C_2})$ using the arguments in Remark \ref{rem:k-fold-component}:
\begin{center}
\begin{tikzpicture}
    \node at (0,0) {\ytableausetup{boxsize=0.6cm}\begin{ytableau}
            1 & 2 \\
            7 & 8 \\
            13 & 14 \\
            19 & 20
        \end{ytableau}};
        \node at (0,-1.75) {$T_{C_1}$};
    \draw[->, dotted, thick](1,0) -- (2, 0);
    \node at (3,0) {\ytableausetup{boxsize=0.6cm}\begin{ytableau}
            5 & 6 \\
            11 & 12 \\
            17 & 28 \\
            23 & 24
        \end{ytableau}};
        \node at (3,-1.75) {$P^2(T_{C_1})$};
    \draw[->, dotted, thick](4,0) -- (5, 0);
    \node at (6,0) {\ytableausetup{boxsize=0.6cm}\begin{ytableau}
            1 & 2 \\
            7 & 8 \\
            13 & 14 \\
            19 & 20
        \end{ytableau}};
        \node at (6,-1.75) {$P^{6}(T_{C_1})$};

    \node at (0,-3.5) {\ytableausetup{boxsize=0.6cm}\begin{ytableau}
            3 \\
            4 \\
            5 \\
            6
        \end{ytableau}};
        \node at (0,-5.25) {$T_{C_1}$};
    \draw[->, dotted, thick](1,-3.5) -- (2, -3.5);
    \node at (3,-3.5) {\ytableausetup{boxsize=0.6cm}\begin{ytableau}
            1 \\
            2 \\
            3 \\
            4 \\
        \end{ytableau}};
        \node at (3,-5.25) {$P^2(T_{C_1})$};
    \draw[->, dotted, thick](4,-3.5) -- (5, -3.5);
    \node at (6,-3.5) {\ytableausetup{boxsize=0.6cm}\begin{ytableau}
            21 \\
            22 \\
            23 \\
            24
        \end{ytableau}};
        \node at (6,-5.25) {$P^{6}(T_{C_1}).$};
    \end{tikzpicture}
\end{center}
So we have that $P^6(T_{C_1})=T_{C_1}$ and $P^6(T_{C_2}) = T_{C_5}$ and thus $\cO(\psi(T)) = 6.$

Using the notation of Theorem \ref{thm:ssyt-alg}, we conclude $R=3$ and $|\cO(\psi(T))| = 6.$ Note that $\sum_{i=1}^R e_i = 6.$ Choosing $\ell$ to satisfy the equality
\begin{align*}
    \ell\sum_{i=1}^Re_i = \ell \cdot 6 = \lcm{6, |\cO(\psi(T))|} = \lcm{6, 6}
\end{align*}
we see that $\ell=1$ suffices. Thus the orbit length $|\cO(T)| = 1\cdot 3 = 3.$

\end{example}


\begin{thebibliography}{10}

\bibitem{Alexandersson}
Per Alexandersson, Stephan Pfannerer, Martin Rubey, and Joakim Uhlin.
\newblock Skew characters and cyclic sieving.
\newblock {\em Forum Math. Sigma}, 9:Paper No. e41, 31, 2021.

\bibitem{armstrong}
Drew Armstrong, Christian Stump, and Hugh Thomas.
\newblock A uniform bijection between nonnesting and noncrossing partitions.
\newblock {\em Trans. Amer. Math. Soc.}, 365(8):4121--4151, 2013.

\bibitem{fontainekamnitzer2014}
Bruce Fontaine and Joel Kamnitzer.
\newblock Cyclic sieving, rotation, and geometric representation theory.
\newblock {\em Selecta Mathematica}, 20(2):609--625, 2014.

\bibitem{Fulton_1996}
William Fulton.
\newblock {\em Young Tableaux: With Applications to Representation Theory and Geometry}.
\newblock London Mathematical Society Student Texts. Cambridge University Press, 1996.

\bibitem{gaetz2023rotation}
Christian Gaetz, Oliver Pechenik, Stephan Pfannerer, Jessica Striker, and Joshua~P Swanson.
\newblock Rotation-invariant web bases from hourglass plabic graphs.
\newblock {\em arXiv preprint arXiv:2306.12501}, 2023.

\bibitem{Haiman}
Mark~D Haiman.
\newblock Dual equivalence with applications, including a conjecture of proctor.
\newblock {\em Discrete Mathematics}, 99(1-3):79--113, 1992.

\bibitem{kuperberg1996spiders}
Greg Kuperberg.
\newblock Spiders for rank 2 lie algebras.
\newblock {\em Communications in mathematical physics}, 180(1):109--151, 1996.

\bibitem{pappe}
Joseph Pappe, Stephan Pfannerer, Anne Schilling, and Mary~Claire Simone.
\newblock Promotion and growth diagrams for fans of {D}yck paths and vacillating tableaux.
\newblock {\em J. Algebra}, 655:794--842, 2024.

\bibitem{pechenik}
Oliver Pechenik.
\newblock Cyclic sieving of increasing tableaux and small {S}chr\"oder paths.
\newblock {\em J. Combin. Theory Ser. A}, 125:357--378, 2014.

\bibitem{pechenik2}
Oliver Pechenik and Alexander Yong.
\newblock Genomic tableaux.
\newblock {\em J. Algebraic Combin.}, 45(3):649--685, 2017.

\bibitem{petersen2009promotion}
T~Kyle Petersen, Pavlo Pylyavskyy, and Brendon Rhoades.
\newblock Promotion and cyclic sieving via webs.
\newblock {\em Journal of Algebraic Combinatorics}, 30:19--41, 2009.

\bibitem{Pfannerer}
Stephan Pfannerer, Martin Rubey, and Bruce Westbury.
\newblock Promotion on oscillating and alternating tableaux and rotation of matchings and permutations.
\newblock {\em Algebr. Comb.}, 3(1):107--141, 2020.

\bibitem{purbhoo}
Kevin Purbhoo and Donguk Rhee.
\newblock Minimal orbits of promotion.
\newblock {\em Electron. J. Combin.}, 24(1):Paper No. 1.41, 17, 2017.

\bibitem{reiner}
V.~Reiner, D.~Stanton, and D.~White.
\newblock The cyclic sieving phenomenon.
\newblock {\em J. Combin. Theory Ser. A}, 108(1):17--50, 2004.

\bibitem{rhoades}
Brendon Rhoades.
\newblock Cyclic sieving, promotion, and representation theory.
\newblock {\em J. Combin. Theory Ser. A}, 117(1):38--76, 2010.

\bibitem{russell}
Heather~M Russell.
\newblock An explicit bijection between semistandard tableaux and non-elliptic sl 3 webs.
\newblock {\em Journal of Algebraic Combinatorics}, 38:851--862, 2013.

\bibitem{Schutzenberger1972}
Marcel-Paul Schützenberger.
\newblock Promotion des morphismes d'ensembles ordonnés.
\newblock {\em Discrete Mathematics}, 2(1):73--94, 1972.

\bibitem{Stanley}
Richard~P. Stanley.
\newblock {\em Enumerative Combinatorics}.
\newblock Cambridge Studies in Advanced Mathematics. Cambridge University Press, 1999.

\bibitem{striker2}
Jessica Striker.
\newblock Dynamical algebraic combinatorics: promotion, rowmotion, and resonance.
\newblock {\em Notices Amer. Math. Soc.}, 64(6):543--549, 2017.

\bibitem{striker3}
Jessica Striker.
\newblock Rowmotion and generalized toggle groups.
\newblock {\em Discrete Math. Theor. Comput. Sci.}, 20(1):Paper No. 17, 26, 2018.

\bibitem{striker}
Jessica Striker and Nathan Williams.
\newblock Promotion and rowmotion.
\newblock {\em European J. Combin.}, 33(8):1919--1942, 2012.

\bibitem{thomas1}
Hugh Thomas and Alexander Yong.
\newblock A jeu de taquin theory for increasing tableaux, with applications to {$K$}-theoretic {S}chubert calculus.
\newblock {\em Algebra Number Theory}, 3(2):121--148, 2009.

\bibitem{thomas2}
Hugh Thomas and Alexander Yong.
\newblock Equivariant {S}chubert calculus and jeu de taquin.
\newblock {\em Ann. Inst. Fourier (Grenoble)}, 68(1):275--318, 2018.

\bibitem{Julianna}
Julianna Tymoczko.
\newblock A simple bijection between standard \texorpdfstring{$3 \times n$}{3 x n} tableaux and irreducible webs for \texorpdfstring{$\mathfrak{sl}_3$}.
\newblock {\em Journal of Algebraic Combinatorics}, 35(4):611--632, 2012.

\end{thebibliography}
\end{document}